\documentclass[12pt,a4paper,reqno]{amsart}
\usepackage[T1]{fontenc}
\usepackage[utf8]{inputenc} 
\usepackage[english]{babel}
\usepackage{lmodern} 
\usepackage{amsmath, amssymb, amsfonts, amsthm}
\usepackage{graphicx}
\usepackage{multicol} 
\usepackage{geometry} 
\usepackage{hyperref} 
\usepackage{xcolor} 
\usepackage{fancyhdr} 

\usepackage{float}
\usepackage{cite}
\usepackage{enumerate}     

\hypersetup{
  colorlinks = true,
  linkcolor = blue,
  citecolor = magenta,
  urlcolor  = teal,
  pdftitle  = {Research Article},
}

\newtheorem{proof.}{Proof}
\newtheorem{theorem}{Theorem}
\newtheorem{coro}{Corollaire}[section]
\newtheorem{lem}{Lemma}
\newtheorem{step}{Step}[section]
\newtheorem{def.}{Definition}[section]
\newtheorem{remark}{Remark}[section]

\title{Sums of three Fibonacci numbers as concatenations of three repdigits in base $b$}

\date{\today} 

\author{Passimzouwé Dagou}
\address{Department of Mathematics, University of Kara, Togo}
\email{\url{passimzouwedagou@gmail.com}}

\author{Pagdame Tiebekabe}
\address{Department of Mathematics, University of Kara, Togo}
\email{\url{tpagdame.math@gmail.com}}

\author{Kouèssi Norbert Adédji}
\address{ Institute of Mathematics and Physics, Department of Mathematics, University of Abomey-Calavi, 072 BP. 50
	Cotonou, Benin Republic.}
\email{\url{adedjnorb1988@gmail.com}}

\author{Kokou Tchariè}
\address{Department of Mathematics, University of Kara, Togo}
\email{\url{tkokou09@gmail.com}}

\begin{document}
\pagestyle{fancy}
\maketitle
\textbf{Abstract}. 
In this paper, we investigate sums of three Fibonacci numbers that can be expressed as concatenations of three repdigits in base $b$, where $b\ge 2$ is an integer. We prove that for bases $2\le b\le 10$, only finitely many such sums exist, and we determine all of them explicitly. Among these solutions, the largest occurs for $b=4$ and is given by
$$
F_{42}+F_{29}+F_{20}=268435290=\overline{33333333331122}_4.
$$

\noindent \textbf{AMS Subject Classifications:} $11B39,$ $11J86,$ $11D61,$ $11Y50$ $11A63,$ $11B37.$\\
\textbf{Keywords:} Diophantine equations; Fibonacci numbers; $b$-repdigits; linear forms in
logarithms; reduction method; concatenations.

\section{Introduction}
\label{sect1}

The interaction between recurrence sequences and digital representations of integers has attracted considerable attention in recent years. In particular, problems concerning the occurrence of special digital patterns within classical sequences such as the Fibonacci sequence have proved to be a rich source of Diophantine questions combining number theory, recurrence relations, and combinatorial properties of numeration systems. Among these patterns, \emph{repdigits}—integers whose digits are all identical in a given base—and their \emph{concatenations} play a central role.

In a seminal paper published in 2019, Alahmadi, Altassan, Luca, and Shoaib (see \cite{AALS:2019}) initiated a systematic study of Fibonacci numbers that can be expressed as concatenations of two repdigits. Their work opened a new line of research, leading to several subsequent contributions that extended the notion of concatenation and repdigits to other linear recurrence sequences. Notably, analogous problems have been investigated for sequences such as Pell, Pell--Lucas, Padovan, Perrin, and balancing numbers, highlighting the broad relevance and flexibility of this framework (see \cite{a}, \cite{a1}, \cite{a3},  \cite{Ddamulira}, \cite{Qu-Zeng:2020}, \cite{Rayaguru-Panda:2020}, \cite{a2}).

Motivated by these developments, and after a careful review of the existing literature, we address a natural and yet unexplored question:

 \emph{can sums of Fibonacci numbers admit representations as concatenations of repdigits?} 
 
 More precisely, in this paper we study the sum of three Fibonacci numbers that can be written as the concatenation of three repdigits in base $b$. The main objective of this work is to characterize such representations, establish finiteness results where applicable, and determine all possible solutions under suitable constraints. Our results extend and complement previous studies on digital properties of Fibonacci numbers, and contribute further insight into the interplay between recurrence sequences and digital concatenation phenomena.

Let $\{F_n\}_{n\ge 0}$ be  the Fibonacci sequence given by $F_{n+2}=F_{n+1} + F_n,$ with initial values $F_0=0$ and $F_1=1.$  If 
$$
(\alpha, \beta) =\left(\dfrac{1+\sqrt{5}}{2},\dfrac{1-\sqrt{5}}{2} \right)
$$
is the pair of roots of the characteristic equation $x^2 -x-1=0$ of the Fibonacci numbers, then the Binet's formula for its general term is
\begin{align}
\label{eq2}
F_n=\dfrac{\alpha^n-\beta^n}{\alpha-\beta}\quad n\ge 0.
\end{align}
It can be seen that $1<\alpha <2,\;-1<\beta <0$ and $\alpha\beta=-1.$ The following relations between $n$-th Fibonacci number $F_n$  and $\alpha$ is well known
\begin{align}
\label{eq3}
\alpha^{n-2} <F_n<\alpha^{n-1}\quad n\ge 0.
\end{align}
We recall that a positive integer $R$ is called a base $b$-repdigit if its all digits are the same in base $b.$  That is, $R$ is of the form
$$
R=\dfrac{d(b^m-1)}{b-1}=\overline{\underbrace{d\ldots d}_{m~times}}~_{(b)},
$$
for some positive integers $d, m$ with $1\le d\le b-1$ and $m \ge 1.$ When $b=10,$ we omit the base and we simply say that $R$ is a repdigit.  Therefore we
deal with the following  Diophantine equation
\begin{align}
\label{eq1}
F_{n_1}+F_{n_2}+ F_{n_3}=\overline{\underbrace{d_1\ldots d_1}_
	{\ell_1~times}\underbrace{d_2\ldots d_2}_
	{\ell_2~times}\underbrace{d_3\ldots d_3}_
	{\ell_3~times}}~_{(b)}
\end{align}
where $d_1, \ell_1, \ell_2, \ell_3\ge 1$ and $d_1, d_2, d_3\in \{0, 1,\ldots, b-1\}.$ 
Therefore the main results of this paper are stated in the following theorems.
\begin{theorem}
\label{thrm1}
Let $(n_1, n_2, n_3, \ell_1, \ell_2, \ell_3, d_1, d_2, d_3)$ be a solution to equation \eqref{eq1} satisfying
\[
n_1 \ge n_2 \ge n_3 \ge 0, \quad 
\ell_3 \ge \ell_2 \ge \ell_1 \ge 1, \quad 
d_1, d_2, d_3 \in \{0, 1, 2, \dots, b-1\}, \quad d_1>0.
\]
Then we have the explicit upper bound
\[
n_1 < 3.09 \times 10^{82} \, \log^{11} b.
\]
\end{theorem}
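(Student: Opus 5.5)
We write the right-hand side of \eqref{eq1} in closed form:
\[
N=\frac{1}{b-1}\Bigl(d_1 b^{\ell_1+\ell_2+\ell_3}-(d_1-d_2)b^{\ell_2+\ell_3}-(d_2-d_3)b^{\ell_3}-d_3\Bigr).
\]
Put $L=\ell_1+\ell_2+\ell_3$. Comparing sizes through \eqref{eq3} gives $b^{L-1}\le N<3\alpha^{n_1-1}$ and $N\ge F_{n_1}>\alpha^{n_1-2}$. So $L\log b$ and $n_1\log\alpha$ differ by $O(\log b)$, and in particular $L\log b<n_1\log\alpha+O(\log b)$. We may assume $n_1$ is large, say $n_1>200$; otherwise there is nothing to prove.

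We then use the standard Baker-type cascade, applying Matveev's theorem repeatedly. Each application isolates one more "large" term and bounds one gap variable in terms of $\log n_1$ and $\log b$. There are five gap quantities: $n_1-n_2$, $n_2-n_3$, $\ell_1$, $\ell_2$ and $\ell_3$ (the last ones measure the lengths of the digit blocks). Since $\ell_3\ge\ell_2\ge\ell_1$, the block lengths can be reached in stages. At each step we rewrite \eqref{eq1} via Binet's formula \eqref{eq2}, keeping on the left the terms that are already "controlled":
\[
\frac{\alpha^{n_1}}{\sqrt5}\Bigl(1+\alpha^{n_2-n_1}+\cdots\Bigr)-\frac{d_1b^{L}-(d_1-d_2)b^{\ell_2+\ell_3}-\cdots}{b-1}=\text{(small terms)}.
\]
Dividing by the dominant term gives a linear form
\[
\Lambda_i=\alpha^{n_1}\,b^{-L'}\cdot\gamma_i-1,
\]
where $\gamma_i$ is an algebraic number of degree at most $2$ in $\mathbb{Q}(\sqrt5)$. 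Its height is bounded by the gap variables already controlled, namely $h(\gamma_i)\ll(\text{previous gaps})\log b+\log b$. The upper bound on $|\Lambda_i|$ is $\ll \max(\alpha^{-(\text{next Fibonacci gap})},\,b^{-(\text{next digit gap})})$.

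Matveev's lower bound $\log|\Lambda_i|>-C\,(1+\log n_1)\,\log\alpha\cdot\log b\cdot h(\gamma_i)$, with $t=3$ and $D=2$, then yields
\[
\min\{(n_1-n_2)\log\alpha,\ \ell_1\log b\}<c_1\log^2 b\,\log n_1 .
\]
We then split into cases according to which of the two is the minimum. In each case we feed the bound into the next linear form, for example to get $\min\{(n_1-n_3)\log\alpha,(\ell_1+\ell_2)\log b\}\ll\log^3 b\log^2 n_1$, and so on.

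After all the gaps are bounded, the final linear form involves every remaining term, with a $\gamma$ whose height is $\ll\log^{5}b\log^{4}n_1$-ish. Here we use that $L\log b\approx n_1\log\alpha$, which forces the last comparison to be against $n_1$ itself. This gives an inequality of the shape
\[
n_1<c\,\log^{k}n_1\,\log^{m}b .
\]
The exponent $11$ in the stated bound suggests about five or six nested applications, each contributing roughly a factor $\log b\cdot\log n_1$. Nonzeroness of each $\Lambda_i$ is checked by the usual Galois conjugation argument. If $\Lambda_i=0$ we would have $\alpha^{n_1}\gamma_i\in\mathbb{Q}$; conjugating $\alpha\mapsto\beta$ and comparing absolute values gives a contradiction for large $n_1$.

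Finally, we would invert $n_1<c\log^k n_1\log^m b$ using the standard lemma: if $x<A\log^k x$ with $A$ large, then $x<2^k A\log^k A$. The bound $\log(c\log^m b)\ll\log\log b$ is absorbed into the constant, and keeping track of the constants gives $n_1<3.09\times10^{82}\log^{11}b$.

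The main obstacle is the case analysis. The Fibonacci gaps and the repdigit-block gaps interleave, which gives up to $2^4$ branches. In each branch we must choose the right $\gamma_i$, ensure $\Lambda_i\neq0$ (especially in branches where $d_1=d_2$ or $d_2=d_3$ collapse blocks), and track the polynomial dependence on $\log b$ carefully so that the total exponent stays at $11$.
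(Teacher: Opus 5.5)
Your plan is the paper's plan. It uses the closed form of the concatenation and $\ell_1+\ell_2+\ell_3<n_1+2$. It runs a cascade of three-term linear forms in an auxiliary $\gamma\in\mathbb{Q}(\sqrt5)$, $\alpha$ and $b$, absorbing already-controlled terms into $\gamma$. At each stage it branches on whether the next Fibonacci gap or the next block length is smaller. It ends with a form whose error is $O(\alpha^{-n_1})$, gets nonvanishing by conjugation, and inverts with the Guzm\'an--Luca lemma.

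The only change of tool is that the paper uses Baker--W\"ustholz with $C(3,2)<9.34\cdot10^{13}$ rather than Matveev. That does not affect the method, but your sentence ``keeping track of the constants gives $3.09\times10^{82}$'' is not right as stated, because that number comes from the paper's particular constants. With Matveev you get a different constant. A rough count suggests a much smaller one, so the stated bound would follow a fortiori, but this has to be computed, not asserted.

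Two pieces of bookkeeping need correcting. First, only four gaps are ever bounded: $n_1-n_2$, $n_1-n_3$, $\ell_1$, $\ell_2$. The quantity $\ell_3$ is never bounded; it is the exponent in the final form. The branches are the $\binom{4}{2}=6$ interleavings of these gaps (the paper's Cases 1A, 1B-A, 1B-B, 2A, 2B-A, 2B-B), not $2^4$. Relatedly, once $n_1-n_2$ is bounded, the next form yields $\min\{(n_1-n_3)\log\alpha,\ \ell_1\log b\}$, not a bound involving $\ell_1+\ell_2$. The reason is that the neglected digit term is still $b^{\ell_2+\ell_3}$ against $b^{L}$.

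Second, and more substantively, the exponent $11$ is not a count of nested applications. The five applications give successively $\log^2 b\log n_1$, $\log^3 b\log^2 n_1$, and so on, ending with $n_1<c\log^6 b\,\log^5 n_1$. Inverting with $H=c\log^6 b$ gives $n_1<2^5H(\log H)^5$, where $\log H=\log c+6\log\log b$. This term cannot be ``absorbed into the constant'' uniformly in $b$. You must bound it by a multiple of $\log b$; the paper uses $159.35+6\log\log b<227\log b$ for $b\ge2$. That is where the remaining $\log^5 b$ comes from, so $11=6+5$.
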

\begin{theorem}
\label{thrm2}
For each integer base $b$ with $2 \le b \le 10$, there exist a finite number of sums of three Fibonacci numbers that can be written as concatenations of three repdigits in base $b$. In total, there are $2665$ such distinct sums across all bases. 

Moreover, the following table presents, for each base $b$, 
\begin{itemize}
    \item the number of distinct sums, 
    \item the largest sum $F_{n_1}+F_{n_2}+F_{n_3}$, 
    \item and its representation in base $b$.
\end{itemize}

\begin{center}
\begin{table}[H]
\caption{Largest sums of three Fibonacci numbers represented as concatenations of three repdigits in bases $2$ to $10$}
\renewcommand{\arraystretch}{1.3}
\scriptsize{
\begin{tabular}{|c|c|c|c|}
\hline
$b$ & $2$ & $3$ & $4$ \\ \hline
Number of distinct sums & $113$ & $138$ & $217$ \\ \hline
Largest sum $F_{n_1}+F_{n_2}+F_{n_3}$ & $F_{23}+F_{8}+F_{2}=28679$ & $F_{27}+F_{14}+F_{9}=196829$ & $F_{42}+F_{29}+F_{20}=268435290$ \\ \hline
Representation & $111000000000111$ & $100222222222$ & $33333333331122$ \\ \hline
\end{tabular}
}

\vspace{2mm}

\scriptsize{
\begin{tabular}{|c|c|c|c|}
\hline
$b$ & $5$ & $6$ & $7$ \\ \hline
Number of distinct sums & $250$ & $334$ & $348$ \\ \hline
Largest sum $F_{n_1}+F_{n_2}+F_{n_3}$ & $F_{27}+F_{14}+F_{10}=196850$ & $F_{42}+F_{29}+F_{20}=24186563$ & $F_{34}+F_{24}+F_{17}=5750852$ \\ \hline
Representation & $22244400$ & $2222222455$ & $66611222$ \\ \hline
\end{tabular}
}

\vspace{2mm}

\scriptsize{
\begin{tabular}{|c|c|c|c|}
\hline
$b$ & $8$ & $9$ & $10$ \\ \hline
Number of distinct sums & $387$ & $413$ & $465$ \\ \hline
Largest sum $F_{n_1}+F_{n_2}+F_{n_3}$ & $F_{29}+F_{25}+F_{23}=617911$ & $F_{33}+F_{21}+F_{9}=3535558$ & $F_{35}+F_{28}+F_{26}=3535558$ \\ \hline
Representation & $2266667$ & $6577777$ & $9666669$ \\ \hline
\end{tabular}
}
\end{table}
\end{center}
\end{theorem}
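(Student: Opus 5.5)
Theorem \ref{thrm1} gives $n_1<3.09\times 10^{82}\log^{11} b$, which for $2\le b\le 10$ is at most about $3.09\times10^{82}(\log 10)^{11}<3\times 10^{86}$. The plan is to cut this bound down to a small explicit number with the Baker--Davenport / Dujella--Pethő reduction, separately for each base $b\in\{2,\dots,10\}$, and then finish by a direct computer search.

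\textbf{Reduction, following the chain of linear forms.} I would retrace the linear forms in logarithms used to prove Theorem \ref{thrm1}. Write the right-hand side of \eqref{eq1} as
\[
\frac{1}{b-1}\Bigl(d_1 b^{\ell_1+\ell_2+\ell_3}-(d_1-d_2)b^{\ell_2+\ell_3}-(d_2-d_3)b^{\ell_3}-d_3\Bigr),
\]
and use Binet's formula \eqref{eq2} on the left. This produces successively small forms of the shape
\[
\Gamma=k\log\alpha-L\log b+\log\mu ,
\]
where $L$ is one of the partial lengths and $\mu$ ranges over finitely many algebraic constants. The first step isolates $\alpha^{n_1}/\sqrt5$ against $d_1b^{L}/(b-1)$. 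Later steps absorb $\alpha^{n_2}$, $\alpha^{n_3}$, or the next block of the repdigit, and the constant $\mu$ then depends on $d_i$ and on $b^{\ell_1}$, $b^{\ell_2}$, or $\alpha^{n_1-n_2}$, $\alpha^{n_1-n_3}$.

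At each stage:
\begin{itemize}
\item Bound the small gap parameters, such as $\ell_1$, $\ell_2$, $n_1-n_2$, and $n_1-n_3$, by the previous reduced bound.
\item Enumerate all resulting values of $\mu$.
\item Apply the Dujella--Pethő lemma with $\tau=\log\alpha/\log b$ (irrational, since $\alpha^k=b^L$ is impossible). Use the convergents $p_q/q_q$ of $\tau$ with $q_q>6M$, where $M\approx 3\times10^{86}$.
\item In the degenerate cases where $\mu$ is a power of $\alpha$ times a power of $b$, so that $\varepsilon\le 0$, fall back on Legendre's criterion via continued fractions.
\end{itemize}
Iterating through the chain should bring the gap parameters down to a few hundred. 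Feeding these into the final form then gives something like $n_1\le 300$. A second round with the new bound should shrink this to about $n_1\le 150$ for every $b\le 10$.

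\textbf{Main obstacle.} The hardest part will be the number of cases and the size of the $\mu$-families in the middle steps. There the constants involve both $d_1,d_2,d_3\le b-1$ and two gaps at once, so we get millions of Dujella--Pethő instances per base. Some choices also make the linear form vanish identically, for example $\mu$ a rational combination of $\alpha^j$ where a Fibonacci identity holds. For those I would prove directly that $\Gamma=0$ forces an algebraic identity, which can be ruled out by conjugating $\alpha\mapsto\beta$, and treat them separately.

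\textbf{Final search.} With $n_1\le N_0$ (say $N_0=150$), I would enumerate all triples $n_1\ge n_2\ge n_3\ge0$ with $n_1\le N_0$. For each base I would compute the base-$b$ digit string of $F_{n_1}+F_{n_2}+F_{n_3}$ and test whether it splits into exactly three maximal-or-admissible runs $d_1^{\ell_1}d_2^{\ell_2}d_3^{\ell_3}$ with $\ell_3\ge\ell_2\ge\ell_1\ge1$ and $d_1\neq0$. Here $d_i$ may be equal to $d_{i+1}$, so I test every block decomposition, not only maximal runs. Recording the distinct sums yields the counts in the table and their total of $2665$, with the maxima as listed, e.g.\ $F_{42}+F_{29}+F_{20}=\overline{33333333331122}_4$.
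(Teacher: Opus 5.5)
The step that fails as written is your final search. Your reduction plan is the paper's own: Theorem~\ref{thrm1} gives $n_1<3\times10^{86}$, the same chain of forms $\Gamma_1,\Gamma_2,\dots,\Gamma$ is reduced one gap parameter at a time with Lemma~\ref{lem3}, and a computer search finishes.

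In that search you keep only splittings $d_1^{\ell_1}d_2^{\ell_2}d_3^{\ell_3}$ with $\ell_3\ge\ell_2\ge\ell_1$, the ordering in the hypothesis of Theorem~\ref{thrm1}. Theorem~\ref{thrm2}, however, counts concatenations with no ordering of the block lengths.

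Your own example $\overline{33333333331122}_4$ has exactly three maximal runs. So its only splitting into three constant blocks is $(\ell_1,\ell_2,\ell_3)=(10,2,2)$, which your filter rejects. The same happens for the listed maxima in bases $2,5,6,7,8,10$; for instance $\overline{111000000000111}_2$ has lengths $(3,9,3)$ and $\overline{9666669}_{10}$ has $(1,5,1)$. Since a number's base-$b$ digit string is unique, these sums would simply be absent from your output, so your search would not return the claimed maxima or counts.

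The remedy is to drop the ordering altogether. Nothing in the Baker estimates or in the reductions uses it. Once it is dropped, every reduction must run over all pairs $(\ell_1,\ell_2)$ in range, not only $\ell_1\le\ell_2$ as the paper does in Steps~\ref{step4.7}, \ref{step4.9}, \ref{step4.10} and \ref{step4.14}.

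Apart from this, two of your choices are genuine improvements on the paper. First, the degenerate case you anticipate does occur. Since $1+\alpha^{-2}=\sqrt5/\alpha$, for $d_1=b-1$ and $n_1-n_2=2$ the constant in $\Gamma_2$ equals $\alpha^{-1}$; this is the identity $F_n+F_{n-2}=L_{n-1}$. Then $\mu\in\mathbb{Z}+\mathbb{Z}\tau$ and $\varepsilon<0$ for every convergent, contrary to Tables~\ref{table3} and~\ref{table4}. The same happens for $b=2$, $d_1=1$, $n_1-n_2=6$ (constant $2\alpha^{-3}$) and for $b=5$, $d_1=4$, $n_1-n_2=10$ (constant $5\alpha^{-5}$). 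These cases must be settled by the continued-fraction argument you describe.

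Second, your second round of reduction is needed. With $q>6M$, Lemma~\ref{lem3} applied to $|\Gamma|<4.48\alpha^{-n_1}$ cannot yield anything below $\log(9.31q)/\log\alpha>420$. So the paper's $n_1\le 74$ tacitly requires a further iteration.

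Finally, no correct search reproduces the table verbatim. In base $6$, $\overline{2222222455}_6=24186563=F_{37}+F_{23}+F_{11}$, whereas $F_{42}+F_{29}+F_{20}=268435290$. In base $10$, $F_{35}+F_{28}+F_{26}=9666669$, not $3535558$.
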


As immediate consequences of the main theorems, we obtain the following corollaries.

\begin{coro}
In equation \eqref{eq1}, setting $n_3 = 0$ yields all distinct sums of two Fibonacci numbers that can be expressed as concatenations of three repdigits in base $b$. In particular we have the following results listened in the table below.

\begin{center}
	\begin{table}[H]
		\caption{Largest sums of two Fibonacci numbers represented as concatenations of three repdigits in bases $2$ to $10$}
		\renewcommand{\arraystretch}{1.3}
		\scriptsize{
			\begin{tabular}{|c|c|c|c|}
				\hline
				$b$ & $2$ & $3$ & $4$ \\ \hline
				Number of distinct sums & $34$ & $42$ & $49$ \\ \hline
				Largest sum $F_{n_1}+F_{n_2}$ & $F_{21}+F_{17}=12543$ & $F_{23}+F_{16}=29644$ & $F_{21}+F_{17}=12543$ \\ \hline
				Representation & $11000011111111$ & $1111122221$ & $3003333$ \\ \hline
			\end{tabular}
		}
		
		\vspace{2mm}
		
		\scriptsize{
			\begin{tabular}{|c|c|c|c|}
				\hline
				$b$ & $5$ & $6$ & $7$ \\ \hline
				Number of distinct sums & $57$ & $71$ & $65$ \\ \hline
				Largest sum $F_{n_1}+F_{n_2}$ & $F_{27}+F_{12}=196562$ & $F_{37}+F_{23}=24186474$ & $F_{21}+F_{13}=11179$ \\ \hline
				Representation & $22242222$ & $2222222230$ & $44410$ \\ \hline
			\end{tabular}
		}
		
		\vspace{2mm}
		
		\scriptsize{
			\begin{tabular}{|c|c|c|c|}
				\hline
				$b$ & $8$ & $9$ & $10$ \\ \hline
				Number of distinct sums & $81$ & $78$ & $80$ \\ \hline
				Largest sum $F_{n_1}+F_{n_2}$ & $F_{27}+F_{12}=196562$ & $F_{21}+F_{19}=15127$ & $F_{28}+F_{22}=335522$ \\ \hline
				Representation & $577722$ & $22667$ & $335522$ \\ \hline
			\end{tabular}
		}
	\end{table}
\end{center}
\end{coro}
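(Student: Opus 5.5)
This corollary is a specialization of Theorems \ref{thrm1} and \ref{thrm2}, so no new Diophantine machinery is needed. The plan is to set $n_3=0$, so that $F_{n_3}=F_0=0$ and equation \eqref{eq1} becomes
\[
F_{n_1}+F_{n_2}=\overline{\underbrace{d_1\ldots d_1}_{\ell_1}\underbrace{d_2\ldots d_2}_{\ell_2}\underbrace{d_3\ldots d_3}_{\ell_3}}_{(b)}.
\]
Every solution of this two-term equation with $n_1\ge n_2\ge 0$ gives a solution $(n_1,n_2,0,\ell_1,\ell_2,\ell_3,d_1,d_2,d_3)$ of \eqref{eq1}. It satisfies the ordering $n_1\ge n_2\ge n_3=0$ and the same constraints on the digits and lengths. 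Conversely, every solution of \eqref{eq1} with $n_3=0$ is a sum of two Fibonacci numbers. So the set of two-term sums is exactly the subset of three-term solutions with $n_3=0$.

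The first consequence is the bound. By Theorem \ref{thrm1}, any such solution satisfies $n_1<3.09\times 10^{82}\log^{11} b$. Finiteness for each $2\le b\le 10$ then follows from Theorem \ref{thrm2}, since we are taking a subset of a finite set.

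The tables come from the enumeration already used for Theorem \ref{thrm2}. The plan is to revisit the reduced range of $n_1$ obtained there by the Baker--Davenport/LLL reduction. Within that range, for each base $b$ we list every pair $n_1\ge n_2\ge 0$ such that the base-$b$ expansion of $F_{n_1}+F_{n_2}$ splits into at most three maximal runs of digits, with the first digit nonzero. Equivalently, we filter the three-term solution list to entries in which some index equals $0$.

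The lengths need one check: the statement requires $\ell_1,\ell_2,\ell_3\ge1$, which allows $d_i=d_{i+1}$. A sum whose base-$b$ expansion has fewer than three runs can therefore still be written as a concatenation of three blocks, provided it has at least three digits. The program must count such sums consistently with the convention used in Theorem \ref{thrm2}.

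We then deduplicate sums. Different pairs can give the same value, for example $F_1=F_2$, and different indices can give the same sum. For each $b$ we record the count and the maximal value with its base-$b$ representation, which yields the table.

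The main obstacle is computational bookkeeping rather than theory. The filter must use exactly the same conventions as the search behind Theorem \ref{thrm2}: whether $n_2=0$ is allowed, whether equal adjacent digits are allowed, and how repeated values are counted. Otherwise the counts in the table will not match. I would first verify the few largest entries by direct computation, for instance $F_{28}+F_{22}=317811+17711=335522$ in base $10$, and then cross-check the counts against the three-term search output.
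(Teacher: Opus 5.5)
Your plan matches the paper. The paper gives no separate argument for this corollary: it treats it as the $n_3=0$ specialization of Theorems~\ref{thrm1} and~\ref{thrm2}, that is, it reads off the $n_3=0$ solutions of the final search over $n_1\le 74$, $\ell_1+\ell_2+\ell_3\le 76$.

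One small correction concerns your appeal to Theorem~\ref{thrm1}. That theorem is stated under the extra hypothesis $\ell_3\ge\ell_2\ge\ell_1$, and several listed solutions violate it (e.g.\ $2222222230_6$ has $(\ell_1,\ell_2,\ell_3)=(8,1,1)$). So rather than saying the two-term solutions satisfy ``the same constraints on the lengths'', you should note that the argument of Section~\ref{sect3} appears to use only $\ell_i\ge 1$ and $\ell_1+\ell_2+\ell_3<n_1+2$, never that ordering.
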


\begin{coro}
In equation \eqref{eq1}, setting $n_2 = n_3 = 0$ yields all Fibonacci numbers that can be expressed as concatenations of three repdigits in base $b$. In particular we get the following results listened in the table bellow.

\begin{center}
	\begin{table}[H]
		\caption{Largest Fibonacci numbers represented as concatenations of three repdigits in bases $2$ to $10$}
		\renewcommand{\arraystretch}{1.3}
		\scriptsize{
			\begin{tabular}{|c|c|c|c|}
				\hline
				$b$ & $2$ & $3$ & $4$ \\ \hline
				Number of distinct sums & $4$ & $6$ & $7$ \\ \hline
				Largest $F_{n}$ & $F_{10}=55$ & $F_{14}=377$ & $F_{19}=4181$ \\ \hline
				Representation & $110111$ & $111222$ & $1001111$ \\ \hline
			\end{tabular}
		}
		
		\vspace{2mm}
		
		\scriptsize{
			\begin{tabular}{|c|c|c|c|}
				\hline
				$b$ & $5$ & $6$ & $7$ \\ \hline
				Number of distinct sums & $5$ & $6$ & $5$ \\ \hline
				Largest  $F_{n}$ & $F_{15}=610$ & $F_{24}=46386$ & $F_{17}=1597$ \\ \hline
				Representation & $4420$ & $554400$ & $4441$ \\ \hline
			\end{tabular}
		}
		
		\vspace{2mm}
		
		\scriptsize{
			\begin{tabular}{|c|c|c|c|}
				\hline
				$b$ & $8$ & $9$ & $10$ \\ \hline
				Number of distinct sums & $6$ & $6$ & $6$ \\ \hline
				Largest  $F_{n}$ & $F_{16}=987$ & $F_{19}=4181$ & $F_{22}=17711$ \\ \hline
				Representation & $1733$ & $5655$ & $17711$ \\ \hline
			\end{tabular}
		}
	\end{table}
\end{center}
\end{coro}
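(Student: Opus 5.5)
The plan is to read this corollary off Theorem~\ref{thrm2} (and, through it, Theorem~\ref{thrm1}), without any new Diophantine analysis. Equation~\eqref{eq1} allows $n_3\ge 0$, and $F_0=0$. So every Fibonacci number $F_n$ that is a concatenation of three base-$b$ repdigits gives the solution $(n_1,n_2,n_3)=(n,0,0)$ of \eqref{eq1}. Conversely, every solution with $n_2=n_3=0$ is such a Fibonacci number. The ordering constraints $n_1\ge n_2\ge n_3\ge 0$, $\ell_3\ge\ell_2\ge\ell_1\ge 1$ and $d_1>0$ are untouched by the specialization. Hence the set in the corollary is exactly the subset of the solution set of Theorem~\ref{thrm2} with $n_2=n_3=0$, for each $2\le b\le 10$.

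First, finiteness: Theorem~\ref{thrm1} gives $n<3.09\times 10^{82}\log^{11} b$. I would stress that the proof of Theorem~\ref{thrm2} brings this down to a small explicit bound on $n_1$ for each $b\le 10$, using Baker--Davenport/LLL reduction on the same linear forms in logarithms. That reduction is uniform in $(n_2,n_3)$, so it applies in particular to $n_2=n_3=0$, and I take the reduced bound as proven in Theorem~\ref{thrm2}.

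Second, enumeration: I would restrict the exhaustive search of Theorem~\ref{thrm2} to $n_2=n_3=0$. Equivalently, for each $b$ and each $n$ up to the reduced bound, write $F_n$ in base $b$ and check whether its digit string splits into three nonempty constant blocks $d_1^{\ell_1}d_2^{\ell_2}d_3^{\ell_3}$ with $d_1\neq 0$ and $\ell_1\le\ell_2\le\ell_3$. Collecting the distinct values per base gives the counts in the table (4, 6, 7, 5, 6, 6, 6, 6, 6) and the largest values, e.g.\ $F_{22}=17711=\overline{17711}_{10}$ and $F_{10}=55=\overline{110111}_2$. Each listed representation can be checked directly, e.g.\ $4181=\overline{1001111}_4$ and $46386=\overline{554400}_6$.

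The only real obstacle is completeness. One must make sure the reduced bound from Theorem~\ref{thrm2} is stated for all $n_1$ regardless of $n_2,n_3$, and not only in a subcase where $n_1-n_2$ or $n_1-n_3$ is assumed large. In the degenerate case $n_2=n_3=0$, the terms $F_{n_2}, F_{n_3}$ vanish, so the linear forms simply lose those terms. I would check that each reduction step still bounds the relevant quantities ($\ell_1$, $\ell_1+\ell_2$, and then $n_1$) in this case, which is if anything easier. Once that is confirmed, the corollary is a filtered rereading of the computer search.
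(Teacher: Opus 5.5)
Your route is the paper's own: the corollary is just the $n_2=n_3=0$ slice of the search behind Theorem~\ref{thrm2}. However, the filter you propose does not reproduce the table. You carry over the ordering $\ell_1\le\ell_2\le\ell_3$ from Theorem~\ref{thrm1}. The paper itself is inconsistent on this point, since its tables do not respect that ordering. Several tabulated values violate it with a \emph{forced} block decomposition:
\begin{itemize}
\item the runs of $55=\overline{110111}_2$ are $11,0,111$, so $(\ell_1,\ell_2,\ell_3)=(2,1,3)$;
\item $13=\overline{1101}_2$ forces $(2,1,1)$;
\item $610=\overline{4420}_5$ forces $(2,1,1)$;
\item $1597=\overline{4441}_7$ can only be cut as $(2,1,1)$ or $(1,2,1)$.
\end{itemize}
With your criterion, base $2$ gives only $\{5,8\}$ (count $2$, largest $F_6=8$) instead of the tabulated $\{5,8,13,55\}$ with largest $F_{10}=55$. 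The largest entries for $b=5,7$ change as well, and so do the counts for $b=3,4$; for instance $55=\overline{2\,00\,1}_3$ and $233=\overline{22\,1\,22}_3$ are lost. So your claim that collecting the values ``gives the counts in the table'' fails for the search you describe. The table answers \eqref{eq1} as originally written, with arbitrary $\ell_1,\ell_2,\ell_3\ge1$, and that is what the corollary refers to. You must drop the ordering on the lengths. Your spot check $46386=\overline{554400}_6$ also fails: $\overline{554400}_6=46368=F_{24}$, so the table entry has transposed digits.

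Once the ordering is dropped, you cannot simply cite the reduced bound of Theorem~\ref{thrm2}. The linear-forms argument for Theorem~\ref{thrm1} never uses $\ell_1\le\ell_2\le\ell_3$; it only uses $\ell_1+\ell_2+\ell_3<n_1+2$, so the large bound survives. But the reduction loops in Steps~\ref{step4.7}, \ref{step4.9}, \ref{step4.10} and \ref{step4.14} that lead to $n_1\le 74$ are run only over $\ell_1\le\ell_2$. That excludes shapes like that of $55=\overline{11\,0\,111}_2$.

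For $n_2=n_3=0$ the repair is cheap. Since $\alpha^{-n_1}<3\alpha^{-1}b^{-(\ell_1+\ell_2+\ell_3-1)}$, the alternatives $\alpha^{-(n_1-n_2)}$ in \eqref{eq7} and \eqref{eq19} are dominated by the $b$-power terms. The whole case tree then collapses to the three forms \eqref{eq7}, \eqref{eq19} and \eqref{eq20}, which bound $\ell_1$, then $\ell_2$, then $n_1$. Run Lemma~\ref{lem3} on these over all $(d_1,d_2,d_3,\ell_1,\ell_2)$ with no ordering, and then do the final search.
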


The proofs of our results rely crucially on lower bounds for linear forms in logarithms, combined with Baker's reduction method. Furthermore, our computations are carried out using the approach developed by Chim and Ziegler in \cite{CV}.

\section{Preliminaries}

\subsection{Linear form in logarithms}
Denote by $\eta_1,\ldots,\eta_s$ algebraic numbers, none of which is $0$ or $1$, and let 
$\log \eta_1,\ldots,\log \eta_s$ be a fixed determination of their logarithms.  
Let $\mathbb{K}=\mathbb{Q}(\eta_1,\ldots,\eta_s)$ and denote by $d_{\mathbb{K}}=[\mathbb{K}:\mathbb{Q}]$ the degree of $\mathbb{K}$ over $\mathbb{Q}$. 

For any $\eta\in \mathbb{K}$, let its minimal polynomial over $\mathbb{Z}$ be
\[
f(X)=a_0 X^\delta + a_1 X^{\delta-1} + \cdots + a_\delta
= a_0 \prod_{j=1}^{\delta} (X - \eta^{(j)}),
\]
where $\eta^{(j)}$, $j=1,\ldots,\delta$, are the roots of $f(X)$.  
The \emph{absolute logarithmic Weil height} of $\eta$ is defined by
\[
h_0(\eta)=\frac{1}{\delta}\left(
\log |a_0| + \sum_{j=1}^{\delta} \log\big(\max\{ |\eta^{(j)}|,1\}\big)
\right).
\]
In particular, if $\eta=p/q$ is a rational number in lowest terms, i.e., $\gcd(p,q)=1$ and $q>0$, then $h_0(\eta)=\log\max(|p|,q)$.  

The \emph{modified height} $h'(\eta)$ is defined by
\[
h'(\eta)=\frac{1}{d_{\mathbb{K}}}\max\{ h(\eta), |\log \eta|, 1 \},
\]
where $h(\eta)=d_{\mathbb{K}}\, h_0(\eta)$ is the standard logarithmic Weil height of $\eta$.

Consider the linear form
\[
L(z_1,\ldots,z_s)= b_1 z_1 + \cdots + b_s z_s,
\]
where $b_1,\ldots,b_s$ are integers, not all zero. Its modified height is
\[
h'(L)=\frac{1}{d_{\mathbb{K}}}\max\{ h(L), 1 \},
\]
where 
\[
h(L)= d_{\mathbb{K}} \log\left( \max_{1\le j\le s} \left\{ \frac{|b_j|}{b} \right\} \right)
\]
is the logarithmic Weil height of $L$, with $b = \gcd(b_1,\ldots,b_s)$.  

If we define $B = \max\{|b_1|,\ldots,|b_s|, e\}$, then we have the simple bound
\[
h'(L) \le \log B.
\]

With these notations, we are now able to state the following fundamental result due to Baker and Wüstholz \cite{Baker-Wustholz:1993}.

\begin{theorem}
	\label{thrm3}
If $\Gamma = L(\log \eta_1,\ldots,\log \eta_s) \ne 0$, then
\[
\log|\Gamma| \ge - C(s,d_{\mathbb{K}})\, h'(\eta_1)\cdots h'(\eta_s)\, h'(L),
\]
where
\[
C(s,d_{\mathbb{K}}) = 18(s+1)! \, s^{s+1} (32d_{\mathbb{K}})^{s+2} \log(2sd_{\mathbb{K}}).
\]

With $|\Lambda|\le \tfrac{1}{2}$, where
\[
\Lambda = e^{\Gamma} - 1 = \eta_1^{b_1}\cdots \eta_s^{b_s} - 1,
\]
we have $\tfrac12|\Gamma| \le |\Lambda| \le 2|\Gamma|$ so that
\[
\log\left|\eta_1^{b_1}\cdots \eta_s^{b_s} - 1\right|
\ge \log|\Gamma| - \log 2.
\]
\end{theorem}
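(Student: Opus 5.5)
The first assertion of Theorem~\ref{thrm3} will not be reproved. It is the Baker--W\"ustholz lower bound for linear forms in logarithms, and I would quote it directly from \cite{Baker-Wustholz:1993} with the constant $C(s,d_{\mathbb{K}})$ exactly as stated there. The only hypotheses to check are those the paper has already set up: the $\eta_j$ are algebraic and different from $0$ and $1$, the $b_j$ are integers not all zero, the heights are the modified heights $h'$, and $\Gamma\neq 0$. What needs an actual argument is the passage from $\Gamma$ to $\Lambda=e^{\Gamma}-1$, which is elementary.

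\textbf{Setting.} I will work in the situation in which the theorem is used later in the paper. There all $\eta_j$ are positive reals (such as $\alpha$, $b$, and rational or real-quadratic coefficients), and the logarithms are the real ones. Hence $\Gamma=\sum b_j\log\eta_j$ is real and $\Lambda=e^{\Gamma}-1$ is real with $\Lambda>-1$. Under $|\Lambda|\le \tfrac12$ this gives exactly $\Gamma=\log(1+\Lambda)$. In a general complex setting one would instead have to choose the determination of the logarithms so that $\Gamma$ is the principal value of $\log(1+\Lambda)$; I would state this as a standing convention.

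\textbf{Comparison of $\Gamma$ and $\Lambda$.} Put $x=\Lambda$ with $|x|\le\tfrac12$, and use the series $\log(1+x)=\sum_{k\ge1}(-1)^{k+1}x^k/k$.
\begin{itemize}
\item For the upper bound,
\[
|\Gamma|\le \sum_{k\ge1}|x|^k=\frac{|x|}{1-|x|}\le 2|x|.
\]
\item For the lower bound,
\[
|\Gamma|\ge |x|-\sum_{k\ge2}\frac{|x|^k}{2}=|x|\Bigl(1-\frac{|x|}{2(1-|x|)}\Bigr)\ge |x|\bigl(1-\tfrac12\bigr)=\tfrac12|x|,
\]
since $|x|/(1-|x|)\le 1$ for $|x|\le\tfrac12$.
\end{itemize}
Together these give $\tfrac12|\Gamma|\le|\Lambda|\le 2|\Gamma|$.

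\textbf{Conclusion.} Taking logarithms in $|\Lambda|\ge\tfrac12|\Gamma|$ gives $\log|\Lambda|\ge\log|\Gamma|-\log2$. Inserting the Baker--W\"ustholz bound for $\log|\Gamma|$ then yields a lower bound for $\log\bigl|\eta_1^{b_1}\cdots\eta_s^{b_s}-1\bigr|$. This is the form needed for Theorem~\ref{thrm1}.

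\textbf{Main obstacle.} The only delicate point is the identification $\Gamma=\log(1+\Lambda)$, that is, excluding a shift by $2\pi i k$. I would settle it by restricting to positive real $\eta_j$, which is all that the paper's applications use.
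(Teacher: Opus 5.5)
Your proposal is correct and matches the paper, which likewise simply quotes Baker--W\"ustholz for the lower bound and asserts $\tfrac12|\Gamma|\le|\Lambda|\le 2|\Gamma|$ without proof; your power-series estimate supplies exactly that omitted elementary step. Your restriction to positive real $\eta_j$ (equivalently, the principal branch) is genuinely needed, not just cosmetic. For other determinations of the logarithms, $\Gamma$ can differ from $\log(1+\Lambda)$ by a nonzero multiple of $2\pi i$, for instance $\Lambda=0$ with $\Gamma=2\pi i$, and then the inequality $\tfrac12|\Gamma|\le|\Lambda|$ fails. The restriction costs nothing here, since every $\eta_j$ used in the paper is a positive real.
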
 
We apply Theorem \ref{thrm3} mainly in the situation where $\mathbb{K}=\mathbb{Q}(\sqrt{5})$, $s=3$ and $d_{\mathbb{K}}=2$.  In this case we obtain
\[
C(3,2)=18\cdot 4! \cdot 3^4 \cdot 64^5 \log 12 < 9.34\cdot 10^{13}.
\]
This value will be used throughout the paper without further mention. We recall below some well-known properties of the absolute logarithmic height.
\[
h_0(\eta+\gamma) \le h_0(\eta)+h_0(\gamma)+\log 2,
\]
\[
h_0(\eta\gamma^{\pm 1}) \le h_0(\eta)+h_0(\gamma),
\]
\[
h_0(\eta^\ell) = |\ell|\,h_0(\eta),
\]
where $\eta,\gamma$ are algebraic numbers and $\ell\in\mathbb{Z}$. We also need the following result from Guzm\'an and Luca\cite{SGL}.
\begin{lem}(Lemma 7 of \cite{SGL}) ~\\ \label{lem2}
	If $l\geq1, H>(4l^2)^l$ and $H>L/(\log L)^l$, then $L<2^lH(\log H)^l$.
\end{lem}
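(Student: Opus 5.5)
We argue by contradiction and use only the monotonicity of $x\mapsto x/(\log x)^l$. Suppose that $L\ge M$, where $M:=2^lH(\log H)^l$. We want to derive $L/(\log L)^l\le H$, which contradicts the hypothesis $H>L/(\log L)^l$.

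\emph{Monotonicity.} Put $f(x)=x/(\log x)^l$ for $x>1$. Then $f'(x)=(\log x-l)/(\log x)^{l+1}$, so $f$ is increasing on $(e^l,\infty)$. Since $H>(4l^2)^l\ge 4^l>e^l$, we have $\log H>l\ge 1$. Hence $M\ge H>e^l$, and then $L\ge M>e^l$ as well. Monotonicity therefore gives
\[
\frac{L}{(\log L)^l}=f(L)\ge f(M)=\frac{2^lH(\log H)^l}{(\log M)^l}.
\]
It remains to show that $\log M\le 2\log H$, because then $f(M)\ge 2^lH(\log H)^l/(2\log H)^l=H$, which is the desired contradiction.

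\emph{Key inequality.} We have $\log M=l\log 2+\log H+l\log\log H$. So $\log M\le 2\log H$ is equivalent to $l\log(2\log H)\le \log H$, that is, to $(2\log H)^l\le H$. Setting $y=H^{1/l}$, this becomes $2\log H=2l\log y\le y$, i.e. $y/\log y\ge 2l$.

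\emph{Using $H>(4l^2)^l$.} This hypothesis says exactly that $y>4l^2$. The function $y/\log y$ is increasing for $y>e$, and $4l^2\ge 4>e$. Hence
\[
\frac{y}{\log y}>\frac{4l^2}{\log(4l^2)}=\frac{2l^2}{\log(2l)}\ge 2l .
\]
The last step holds because $\log(2l)\le l$ for every $l\ge 1$: at $l=1$ it reads $\log 2<1$, and the derivative of $l-\log(2l)$ is $1-1/l\ge 0$.

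\emph{Conclusion.} We obtain $\log M\le 2\log H$, hence $f(L)\ge H$, contradicting $H>L/(\log L)^l$. Therefore $L<2^lH(\log H)^l$.

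If $L\le 1$, so that $\log L$ is not usable, the conclusion is immediate, since $2^lH(\log H)^l>H>1$.

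The only delicate point is the key inequality $(2\log H)^l\le H$. The threshold $(4l^2)^l$ is tailored precisely to make it hold, via the reduction to $y/\log y\ge 2l$.
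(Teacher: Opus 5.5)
Your proof is correct and complete, and it works for every real $l\ge 1$. The paper gives no proof of this lemma; it is quoted from Guzm\'an S\'anchez and Luca. Your argument is, as far as I recall, the standard proof of that lemma: suppose $L\ge M=2^lH(\log H)^l$, use that $x/(\log x)^l$ is increasing on $(e^l,\infty)$, reduce $\log M\le 2\log H$ to $(2\log H)^l\le H$, and then use $y=H^{1/l}>4l^2$ together with the monotonicity of $y/\log y$. Your closing remark about $L\le 1$ is unnecessary, since the assumption $L\ge M$ already forces $L>e^l>1$.
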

\subsection{Reduction method}
The bounds for the variables derived from Baker’s theory are far too large to allow for effective computational verification. To reduce the bounds we use the reduction method due to   Bravo,  G\'omez  and Luca     \cite{BGF} which is a modified version due to Dujella and Peth\H{o} (see \cite{Dujella-Peto:1998}).
\begin{lem}
	\label{lem3}
	Let $M$ be a positive integer, $p/q$ be a convergent of the continued fraction expansion of irrational number $\tau$ such as $q>6M$, and $A, B, \mu$ be some real numbers with $A>0$ and $B>1.$ Furthermore, let $$ \varepsilon:=||\mu q||-M.||\tau q||. $$
	If $\varepsilon>0$, the there is no solution to the inequality 
	$$  0<|u\tau-v+\mu|<AB^{-w}$$
	in positive integers $u, v$ and $w$ with $$ u\leq M \text{ and } w\geq \frac{\log(Aq/\varepsilon)}{\log B}.$$
	Here, for a real number $X$, $||X||$ denotes the distance from $X$ to the nearest integer, that is, $||X||:=\min_{n\in\mathbb{Z}}|X-n|$. 
\end{lem}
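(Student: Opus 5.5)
We argue by contradiction: take a solution $(u,v,w)$ with $u\le M$, multiply the inequality by $q$, and replace $q\tau$ by its nearest integer $p$. This isolates $\mu q$ from an integer up to a small error, which bounds $\|\mu q\|$ and hence $\varepsilon$ from above.

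\emph{Step 1: the convergent gives the nearest integer.} Since $p/q$ is a convergent of $\tau$, we have $|q\tau-p|<1/q$. The hypothesis $q>6M\ge 6$ gives $1/q<1/2$, so $p$ is the integer nearest to $q\tau$ and $|q\tau-p|=\|q\tau\|$.

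\emph{Step 2: multiply by $q$ and regroup.} Suppose $u,v,w$ are positive integers with $u\le M$ and $0<|u\tau-v+\mu|<AB^{-w}$. Multiplying by $q>0$ gives $|uq\tau-vq+\mu q|<qAB^{-w}$. We rewrite the quantity inside the absolute value as
\[
uq\tau-vq+\mu q \;=\; u\,(q\tau-p) \;+\; (up-vq) \;+\; \mu q .
\]
Here $up-vq$ is an integer.

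\emph{Step 3: bound $\|\mu q\|$.} Since $\|\mu q\|$ is at most the distance from $\mu q$ to the integer $vq-up$, the triangle inequality and Step 1 give
\[
\|\mu q\| \le |\mu q+up-vq| \le |uq\tau-vq+\mu q| + u\,|q\tau-p| < qAB^{-w} + M\,\|q\tau\|.
\]

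\emph{Step 4: conclude.} Step 3 says $\varepsilon=\|\mu q\|-M\|q\tau\|<qAB^{-w}$. Because $\varepsilon>0$ by hypothesis, we can take logarithms. Using $B>1$, this yields $B^{w}<Aq/\varepsilon$, that is,
\[
w<\frac{\log(Aq/\varepsilon)}{\log B}.
\]
This contradicts the assumed lower bound on $w$, so no such solution exists.

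The only point needing care is Step 1, namely that $p$ really is the nearest integer to $q\tau$, so that $|q\tau-p|$ equals $\|q\tau\|$. This follows from the convergent property together with $q\ge 2$. The condition $q>6M$ is not otherwise used in the argument. In practice it is what makes $\varepsilon>0$ likely to hold: $M\|q\tau\|<M/q<1/6$, so $\varepsilon>0$ as soon as $\|\mu q\|\ge 1/6$. It is therefore a practical rather than a logical requirement.
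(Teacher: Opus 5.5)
Your proof is correct and is essentially the standard argument behind this lemma. The paper itself gives no proof and only cites Bravo--G\'omez--Luca, whose argument follows Dujella--Peth\H{o}: write $q(u\tau-v+\mu)=u(q\tau-p)+(up-vq)+q\mu$, use $|q\tau-p|=\|q\tau\|$ and $u\le M$ to get $\varepsilon=\|q\mu\|-M\|q\tau\|<qAB^{-w}$, then take logarithms. Your closing remark is also accurate: beyond ensuring $p$ is the nearest integer to $q\tau$, the hypothesis $q>6M$ only serves to make $\varepsilon>0$ likely in practice.
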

\section{Proof the Theorem~\ref{thrm1}}
\label{sect3}

For the proof of Theorem~\ref{thrm1}, we may assume without loss of generality that 
$n_1> 300$, since the remaining cases can be checked directly by computation. The proof of Theorem~\ref{thrm1} is divided into several steps. We derive initial bounds for the variables using estimates coming from linear forms in logarithms.
\begin{step}

	\label{step3.1} 
We prove the following Lemma.
\begin{lem}
\label{lem4}
All the solution of equation (\ref{eq1}) satisfy $$
\min\left\lbrace \ell_{1}\log b, (n_1-n_2)\log\alpha \right\rbrace<1.5\times10^{14}\log^2b\log(n_1+2).
$$
\end{lem}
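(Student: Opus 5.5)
We rewrite \eqref{eq1} so that the leading repdigit block and the dominant Fibonacci term are isolated. The right-hand side of \eqref{eq1} equals
\[
\frac{d_1 b^{\ell_1+\ell_2+\ell_3}-(d_1-d_2)b^{\ell_2+\ell_3}-(d_2-d_3)b^{\ell_3}-d_3}{b-1}.
\]
Inserting Binet's formula \eqref{eq2} for $F_{n_1}$ and moving everything except $\alpha^{n_1}/\sqrt5$ and $d_1 b^{L}/(b-1)$, with $L=\ell_1+\ell_2+\ell_3$, to the right gives
\[
\frac{\alpha^{n_1}}{\sqrt5}-\frac{d_1 b^{L}}{b-1}
= -\frac{(d_1-d_2)b^{\ell_2+\ell_3}+(d_2-d_3)b^{\ell_3}+d_3}{b-1}-F_{n_2}-F_{n_3}+\frac{\beta^{n_1}}{\sqrt5}.
\]
The right-hand side is bounded in absolute value by a small constant times $\max\{b^{\ell_2+\ell_3},\alpha^{n_2}\}$. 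Dividing by $d_1b^{L}/(b-1)$ then yields
\[
|\Lambda_1|<\frac{c}{\min\{b^{\ell_1},\alpha^{n_1-n_2}\}},\qquad
\Lambda_1=\frac{(b-1)}{d_1\sqrt5}\,\alpha^{n_1}b^{-L}-1 .
\]
Here $c$ is absolute, say $c\le 10$. We use $b^{L}\asymp\alpha^{n_1}$, which follows from \eqref{eq3} together with $b^{L-1}\le$ RHS $<b^L$. For $\alpha^{n_1-n_2}$ we use $F_{n_2}+F_{n_3}\le 2\alpha^{n_2-1}$.

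Next we apply Theorem~\ref{thrm3} to $\Lambda_1$ in $\mathbb K=\mathbb Q(\sqrt5)$ with $s=3$ and
\[
\eta_1=\alpha,\quad \eta_2=b,\quad \eta_3=\frac{b-1}{d_1\sqrt5},\qquad b_1=n_1,\quad b_2=-L,\quad b_3=1.
\]
Nonvanishing of $\Lambda_1$ follows by the usual Galois argument. If $\Lambda_1=0$, then $\alpha^{n_1}\sqrt5=d_1b^L\cdot 5/(b-1)\in\mathbb Q$. Conjugating gives $\beta^{n_1}\sqrt5=-\alpha^{n_1}\sqrt5$, which is impossible since $|\alpha^{n_1}|>1>|\beta^{n_1}|$.

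The modified heights are as follows:
\[
h'(\eta_1)=\tfrac12,\qquad h'(\eta_2)\le\log b,\qquad h'(\eta_3)\le \log b+\tfrac12\log5<2\log b \ \ (\text{up to an absolute constant}),
\]
using $h_0(d_1\sqrt5)\le\log(b-1)+\tfrac12\log 5$ and $h_0(b-1)\le\log b$. Also $L\log b<n_1\log\alpha+O(1)$, so $L<n_1+2$ and $h'(L)\le\log(n_1+2)$. Theorem~\ref{thrm3} then gives
\[
\log|\Lambda_1|>-9.34\cdot10^{13}\cdot\tfrac12\cdot\log b\cdot 2\log b\cdot\log(n_1+2)-\log2 .
\]
Comparing this with the upper bound $\log|\Lambda_1|<\log c-\min\{\ell_1\log b,(n_1-n_2)\log\alpha\}$ gives the claimed inequality. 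We absorb the constants $\log c$ and $\log 2$ using $n_1>300$.

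The main obstacle is the bookkeeping for the height of $\eta_3$ in small bases such as $b=2$. There $\log b$ is below $1$, so the $\max\{\cdot,1\}$ in $h'$ and the $\tfrac12\log 5$ term must be bounded by a constant multiple of $\log b$ using $\log b\ge\log2$. The numerical factor $1.5\times10^{14}$ leaves room for this: $9.34\cdot10^{13}\cdot 1.6$ is about $1.5\cdot10^{14}$, so I would verify that $h'(\eta_3)\le 1.6\log b$ holds uniformly, using $d_K=2$ for the normalization. If it fails for $b=2$, the remedy is a slightly larger constant or a direct check for that base.
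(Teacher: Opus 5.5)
Your proposal is correct and follows the paper's proof essentially verbatim: the same form $\Lambda_1=\frac{b-1}{d_1\sqrt5}\alpha^{n_1}b^{-(\ell_1+\ell_2+\ell_3)}-1$, Baker--W\"ustholz over $\mathbb{Q}(\sqrt5)$ with $h'(\alpha)=\tfrac12$, $h'(b)=\log b$ and $B=n_1+2$, followed by comparison with the upper bound for $|\Lambda_1|$. There are two harmless slips: your $c$ is not absolute but of order $b$ (dividing by $d_1b^{\ell_1+\ell_2+\ell_3}/(b-1)$ brings in the factor $(b-1)/d_1$, and the paper gets $7.85b$), which only adds a $\log b$ that is absorbed; and your final check drops the factor $h'(\alpha)=\tfrac12$, so what you actually need is $h'(\eta_3)\le 3.2\log b$ (the paper's bound), which causes no difficulty at $b=2$ --- in fact $h_0(\eta_3)\le\log(\sqrt5(b-1))\le 2\log b$ for every $b\ge 2$.
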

\end{step}	
\begin{proof}
As a first step, we analyze the Diophantine equation \eqref{eq1} and transform it into the equivalent form
\begin{align*}
F_{n_{1}}+ F_{n_{2}}+ F_{n_3}&=\overline{
	\underbrace{d_1\,\ldots\,d_1}_{\ell_1\ \text{times}}\;
	\underbrace{d_2\,\ldots\,d_3}_{\ell_{2}\ \text{times}}\;
	\underbrace{d_3\,\ldots\,d_3}_{\ell_{3}\ \text{times}}
}\\
&= \underbrace{d_1\,\ldots\,d_1}_{\ell_1\ \text{times}}.b^{\ell_{2}+\ell_{3}}+\underbrace{d_2\,\ldots\,d_2}_{\ell_2\ \text{times}}.b^{\ell_{3}}+\underbrace{d_3\,\ldots\,d_3}_{\ell_3\ \text{times}}\\
F_{n_{1}}+ F_{n_{2}}+ F_{n_3}&=d_1\left(\frac{b^{\ell_{1}}-1}{b-1} \right) .b^{\ell_{2}+\ell_{3}}+d_2\left(\frac{b^{\ell_{2}}-1}{b-1} \right).b^{\ell_{3}} +d_3\left(\frac{b^{\ell_{3}}-1}{b-1} \right) 
\end{align*}
and therefore we get
\begin{equation}
\label{eq4}
F_{n_{1}}+ F_{n_{2}}+ F_{n_3}=\frac{1}{b-1}\left(d_1b^{\ell_{1}+\ell_{2}+\ell_{3}}-(d_1-d_2)b^{\ell_{2}+\ell_{3}}-(d_2-d_3)b^{\ell_{3}}-d_3 \right).
\end{equation}
We first observe that combining \eqref{eq3} and \eqref{eq1} yields
$$
b^{\ell_{1}+\ell_{2}+\ell_{3}}>F_{n_1}+F_{n_2}+F_{n_3}>F_{n_1}>\alpha^{n_1-2}               
$$ 
and
\begin{equation}
\label{eq5}
3\alpha^{n_1-1}>3F_{n_1}>F_{n_1}+F_{n_2}+F_{n_3}\geq b^{\ell_{1}+\ell_{2}+\ell_{3}-1}.
\end{equation}
By taking logarithm on the both sides of (\ref{eq5}), we obtain
$$
(\ell_{1}+\ell_{2}+\ell_{3}-1)\log b<(n_1-1)\log\alpha+\log3
$$
which leads to 
$$
\ell_{1}+\ell_{2}+\ell_{3}<n_1+2.
$$
Using (\ref{eq2}) and (\ref{eq4}), we get
$$
\frac{\alpha^{n_{1}}-\beta^{n_{1}}}{\sqrt{5}}+F_{n_2}+F_{n_3}=\frac{1}{b-1}\left(d_1b^{\ell_{1}+\ell_{2}+\ell_{3}}-(d_1-d_2)b^{\ell_{2}+\ell_{3}}-(d_2-d_3)b^{\ell_{3}}-d_3 \right)
$$
which implies that
\begin{align*}
(b-1)\alpha^{n_1} - \sqrt{5}\, d_1 b^{\ell_1+\ell_2+\ell_3} 
&= -\sqrt{5}\Big((d_1-d_2)b^{\ell_2+\ell_3} + (d_2-d_3)b^{\ell_3} + d_3\Big) \\
&\quad - \sqrt{5}\,(b-1)(F_{n_2}+F_{n_3}) + (b-1)\beta^{n_1}, \\[1.5mm]
\Big|(b-1)\alpha^{n_1} - \sqrt{5}\, d_1 b^{\ell_1+\ell_2+\ell_3}\Big|
&< \sqrt{5}\Big((b-1)b^{\ell_2+\ell_3} + (b-1)b^{\ell_3} + (b-1)\Big) \\
&\quad + \sqrt{5}\,(b-1)(F_{n_2}+F_{n_3}) + (b-1)|\beta^{n_1}|, \\[1mm]
&< \sqrt{5}\Big((b-1)b^{\ell_2+\ell_3} + (b-1)b^{\ell_3} + (b-1)\Big) \\
&\quad + 2\sqrt{5}\,(b-1)\alpha^{n_2-1} + (b-1)\alpha^{-n_1}, \\[1mm]
&< \sqrt{5}\,(b-1)b^{\ell_2+\ell_3}\Big( 1 + \frac{1}{b^{\ell_2}} + \frac{1}{b^{\ell_2+\ell_3}} \Big) \\
&\quad + 2\sqrt{5}\,(b-1)\alpha^{n_2-1} + (b-1)\alpha^{-n_1}, \\[1mm]
&< 1.75 \sqrt{5}\, b^{\ell_2+\ell_3+1} + 2\sqrt{5}\, b \alpha^{n_2-1} + b \alpha^{-n_1}, \\[1mm]
&< 1.75 \sqrt{5}\, b^{\ell_2+\ell_3+1} + 4.48\, b \alpha^{n_2-1}.
\end{align*}

Next,
\begin{equation}
\label{eq6}
\left|(b-1)\alpha^{n_{1}}-\sqrt{5}d_{1}b^{\ell_{1}+\ell_{2}+\ell_{3}}\right|< 6.7b\max\{b^{\ell_{2}+\ell_{3}},\alpha^{n_2}\}
\end{equation}
where we used the fact that $n_1 > 300$. Dividing both sides of (\ref{eq6}) by $\sqrt{5}d_1b^{\ell_{1}+\ell_{2}+\ell_{3}}$, we obtain
\begin{align}
\left|\left( \frac{b-1}{\sqrt{5}d_1}\right) .\alpha^{n_1}.b^{-(\ell_{1}+\ell_{2}+\ell_{3})} -1\right| &<
\max\left\lbrace \frac{6.7b^{\ell_{2}+\ell_{3}+1}}{\sqrt{5}b^{\ell_{1}+\ell_{2}+\ell_{3}}}, \frac{6.7b\alpha^{n_2}}{\sqrt{5}b^{\ell_{1}+\ell_{2}+\ell_{3}}}\right\rbrace  \notag\\
&<\max\left\lbrace \frac{6.7b^{-\ell_{1}+1}}{\sqrt{5}}, \frac{6.7b\alpha^{n_2}}{\sqrt{5}\alpha^{n_1-2}}\right\rbrace \notag\\
&<7.85b\max\left\lbrace b^{-\ell_{1}},\alpha^{-(n_1-n_2)}\right\rbrace. \label{eq7}
\end{align}
We now define the linear form
$$
\Gamma_1=\log\left( \frac{b-1}{d_1\sqrt{5}}\right) +n_1\log\alpha-(\ell_{1}+\ell_{2}+\ell_{3})\log b
$$
and put 
\begin{equation}
\label{eq8}
\Lambda_1:= \mathrm{e}^{\Gamma_1}-1= \left( \frac{b-1}{\sqrt{5}d_1}\right) .\alpha^{n_1}.b^{-(\ell_{1}+\ell_{2}+\ell_{3})} -1.
\end{equation}
We assume that $\left|\Lambda_1 \right|\le 0.5 $ and use the theorem of Baker and Wüstholz (Theorem \ref{thrm3}) with data 
$$
\eta_{1}:= \frac{b-1}{d_1\sqrt{5}}, \eta_{2}:= \alpha, \eta_{3}:=b, b_1=1, b_2:= n_1, b_3:=-(\ell_{1}+\ell_{2}+\ell_{3}), s:=3.
$$
Since  $\ell_{1}+\ell_{2}+\ell_{3}<n_1+2$, we can take $B=n_1+2$. Observe that $\mathbb{K}=\mathbb{Q}\left(\eta_{1}, \eta_{2}, \eta_{3} \right)=\mathbb{Q}(\alpha)$, so $d_{\mathbb{K}}:=[\mathbb{K}:\mathbb{Q}]=2$. We proceed to apply Theorem \ref{thrm3} on \eqref{eq8}. We have first to observe that $\Lambda_1\neq0$.  Indeed, if it were equal to zero, it would follow that $\alpha^{n_1}=\frac{\sqrt{5}d_1}{b-1}b^{\ell_{1}+\ell_{2}+\ell_{3}}$. That is, $\alpha^{2n_1}\in\mathbb{Q}$, which is false for any $n_1>0$.
 Using the properties of the logarithmic height, we have
\begin{align*}
h_0(\eta_{1})=h_0\left(\frac{b-1}{\sqrt{5}d_1} \right) &\leq h_0\left(\frac{b-1}{d_1} \right)+h_0\left( \sqrt{5}\right) \\
& \leq h_0(b-1)+h_0(d_1)+\frac{\log5}{2}\\
& \leq 2\log b+\frac{\log5}{2} \leq 3.2\log b \text{ since } b\geq2.
\end{align*}
Then, we have $h'(\eta_{1})<3.2\log b.$ Similarly, we have $h'\left( \eta_{2}\right)=h(\alpha)=\frac{1}{2},\;  h\left( \eta_{3}\right)=h(b)=\log b$. Theorem \ref{thrm3} tells us that
\begin{align*}
\log\left| \Lambda_1\right|&> -C(3,2)\left(3.2\log b\right) \left(\frac{1}{2} \right) \left( \log b\right) \left(\log(n_1+2) \right)-\log2.
\end{align*}
Comparing this last inequality with (\ref{eq7}) leads to 
$$
\min\left\lbrace \ell_{1}\log b, (n_1-n_2)\log\alpha \right\rbrace<1.5\times10^{14}\log^2b\log(n_1+2).
$$
Note that in the case that $\left|\Lambda_1 \right|>0.5 $, inequality (\ref{eq7}) is possible only if either $\ell_{1}\le 4$ and $n_1-n_2\le \left\lfloor \frac{\log(15.7b)}{\log\alpha} \right\rfloor$, which are covered by the bound provided by Lemma \ref{lem4}
\end{proof}

We now distinguish between the following two cases:

\noindent
\textbf{Case 1:} 
\[
\min\left\lbrace \ell_{1}\log b, (n_1-n_2)\log\alpha \right\rbrace = (n_1-n_2)\log\alpha,
\]

\noindent
\textbf{Case 2:} 
\[
\min\left\lbrace \ell_{1}\log b, (n_1-n_2)\log\alpha \right\rbrace = \ell_{1}\log b.
\]

\noindent
We will treat these two cases separately in the following steps.
\begin{step}
			\label{step3.2} 
We consider Case 1 and show that under the assumption that 
$$
(n_1-n_2)\log\alpha<1.5\times10^{14}\log^2b\log(n_1+2)
$$ 
we obtain that 
$$
 \min\left\lbrace\ell_{1}\log b, (n_1-n_3)\log\alpha \right\rbrace <7.48\times10^{27}\log^3b\log^2(n_1+2).
$$
\end{step}
Since we consider Case~1, we assume that 
$$
\min\left\lbrace \ell_{1}\log b, (n_1-n_2)\log\alpha \right\rbrace=(n_1-n_2)\log\alpha<1.5\times10^{14}\log^2b\log(n_1+2).
$$
Using \eqref{eq2} and \eqref{eq4}, we get
$$
\frac{\alpha^{n_{1}}-\beta^{n_{1}}}{\sqrt{5}}+\frac{\alpha^{n_{2}}-\beta^{n_{2}}}{\sqrt{5}}+F_{n_3}=\frac{1}{b-1}\left(d_1b^{\ell_{1}+\ell_{2}+\ell_{3}}-(d_1-d_2)b^{\ell_{2}+\ell_{3}}-(d_2-d_3)b^{\ell_{3}}-d_3 \right)
$$
which implies that
\begin{align*}
(b-1)\alpha^{n_{1}}+(b-1)\alpha^{n_{2}}-\sqrt{5}d_{1}b^{\ell_{1}+\ell_{2}+\ell_{3}}&= -\sqrt{5}\left((d_1-d_2)b^{\ell_{2}+\ell_{3}}+(d_2-d_3)b^{\ell_{3}}+d_3\right)\\
&-\sqrt{5}(b-1)F_{n_3} +(b-1)\left( \beta^{n_1}+\beta^{n_2}\right). 
\end{align*}
Thus
\begin{align*}
\left|(b-1)\alpha^{n_{1}}\left(1+\alpha^{n_2-n_1} \right) -\sqrt{5}d_{1}b^{\ell_{1}+\ell_{2}+\ell_{3}}\right|&< \sqrt{5}\left((b-1)b^{\ell_{2}+\ell_{3}}+(b-1)b^{\ell_{3}}+(b-1)\right)\\
&+\sqrt{5}(b-1)\alpha^{n_3} +2(b-1)\alpha^{-n_2}  \\
&< 1.75\sqrt{5}b^{\ell_{2}+\ell_{3}+1}
+\sqrt{5}b\alpha^{n_3-1}+2b\alpha^{-n_2}
\end{align*}
and therefore
\begin{equation}\label{eq9}
\left|(b-1)\alpha^{n_{1}}\left(1+\alpha^{n_2-n_1} \right) -\sqrt{5}d_{1}b^{\ell_{1}+\ell_{2}+\ell_{3}}\right|< 5.3b\max\{b^{\ell_{2}+\ell_{3}},\alpha^{n_3}\}.
\end{equation}
Dividing both sides of \eqref{eq9} by $\sqrt{5}d_1b^{\ell_{1}+\ell_{2}+\ell_{3}}$, we obtain
\begin{align}
\left|\left( \frac{(b-1)\left( 1+\alpha^{n_2-n_1}\right) }{\sqrt{5}d_1}\right) .\alpha^{n_1}.b^{-(\ell_{1}+\ell_{2}+\ell_{3})} -1\right| &<
\max\left\lbrace \frac{5.3b^{\ell_{2}+\ell_{3}+1}}{\sqrt{5}b^{\ell_{1}+\ell_{2}+\ell_{3}}}, \frac{5.3b\alpha^{n_3}}{\sqrt{5}b^{\ell_{1}+\ell_{2}+\ell_{3}}}\right\rbrace  \notag\\
&<\max\left\lbrace \frac{5.3b^{-\ell_{1}+1}}{\sqrt{5}}, \frac{5.3b\alpha^{n_3}}{\sqrt{5}\alpha^{n_1-2}}\right\rbrace \notag\\
&<9.93b\max\left\lbrace b^{-\ell_{1}},\alpha^{-(n_1-n_3)}\right\rbrace. \label{eq10}
\end{align}
Let set the linear form 
$$
\Gamma_2=\log\left( \frac{(b-1)\left( 1+\alpha^{n_2-n_1}\right) }{\sqrt{5}d_1}\right) +n_1\log\alpha-(\ell_{1}+\ell_{2}+\ell_{3})\log b
$$
and put 
\begin{equation}
\label{eq11}
\Lambda_2:= \mathrm{e}^{\Gamma_2}-1= \left( \frac{(b-1)\left( 1+\alpha^{n_2-n_1}\right) }{\sqrt{5}d_1}\right) .\alpha^{n_1}.b^{-(\ell_{1}+\ell_{2}+\ell_{3})} -1.
\end{equation}
We assume that $\left|\Lambda_2 \right|\le 0.5 $ and use the theorem of Baker and Wüstholz (Theorem \ref{thrm3}) with data 
$$
\eta_{1}:= \left( \frac{(b-1)\left( 1+\alpha^{n_2-n_1}\right) }{\sqrt{5}d_1}\right), \eta_{2}:= \alpha, \eta_{3}:=b, b_1=1, b_2:= n_1, b_3:=-(\ell_{1}+\ell_{2}+\ell_{3}), s:=3.
$$
Since  $\ell_{1}+\ell_{2}+\ell_{3}<n_1+2$, we can take $B=n_1+2$. Observe that $\mathbb{K}=\mathbb{Q}\left(\eta_{1}, \eta_{2}, \eta_{3} \right)=\mathbb{Q}(\alpha)$, so $d_{\mathbb{K}}:=[\mathbb{K}:\mathbb{Q}]=2$. We proceed to apply Theorem \ref{thrm3} on \eqref{eq11}. We have first to observe that $\Lambda_2\neq0$.  Indeed, if it were zero, we would then get that $\alpha^{n_1}+\alpha^{n_2}=\frac{\sqrt{5}d_1b^{\ell_{1}+\ell_{2}+\ell_{3}}}{(b-1) }$. Let $\sigma\neq \text{ id }$ be the unique non-trivial $\mathbb{Q}$-automorphism of $\mathbb{Q}(\sqrt{5})$. Apply $\sigma$ to the both sides, and taking the absolute  values, we obtain
$$
1<\left| \frac{\sqrt{5}d_1b^{\ell_{1}+\ell_{2}+\ell_{3}}}{(b-1) }\right|=\left| \beta^{n_1}+\beta^{n_2}\right|=  \alpha^{-n_1}+\alpha^{-n_2}<1  \text{ for any } n_1>300 \text{ and } n_2>0, 
$$
which is false. Therefore, $\Lambda_2\neq0$. Using the properties of the logarithmic height, we have
\begin{align*}
h_0(\eta_{1})&=h_0\left( \frac{(b-1)\left( 1+\alpha^{n_2-n_1}\right) }{\sqrt{5}d_1}\right)\\& \leq h_0\left(b-1\right)+(n_1-n_2)h_0(\alpha)+h_0(d_1)+\log\sqrt{5}+\log2 \\
& \leq 2\log b+(n_1-n_2)\log\alpha+\log\sqrt{5}+\log2\\
&\le 1.5\times10^{14}\log^2b\log(n_1+2)+2\log b+\log\sqrt{5}+\log2 \\
& < 1.6\times10^{14}\log^2b\log(n_1+2). 
\end{align*}
Then we have $h'(\eta_{1})<1.6\times10^{14}\log^2b\log(n_1+2).$ Similarly, we have $h'\left( \eta_{2}\right)=h(\alpha)=\frac{1}{2},\;  h\left( \eta_{3}\right)=h(b)=\log b$. Theorem \ref{thrm3} tells us that
\begin{align*}
\log\left| \Lambda_2\right|&> -C(3,2)\left(1.6\times10^{14}\log^2b\log(n_1+2)\right) \left(\frac{1}{2} \right) \left( \log b\right) \left(\log(n+2) \right)-\log2\\
&>-7.472\times10^{27}\log^3b\log^2(n_1+2).
\end{align*}
By comparing this inequality with~\eqref{eq10}, we obtain
$$
\min\left\lbrace\ell_{1}\log b, (n_1-n_3)\log\alpha \right\rbrace <7.48\times10^{27}\log^3b\log^2(n_1+2).
$$
Note that in the case where $|\Lambda_2|>0.5$, inequality~\eqref{eq10} can only hold if either $\ell_1 \le 5$ or 
\[
n_1 - n_3 \le \left\lfloor \frac{\log(19.86b)}{\log \alpha} \right\rfloor,
\] 
both of which are already covered by the bounds provided in Step~2. At this stage, we consider the following two sub-cases for Case~1:

\noindent
\textbf{Case 1A:} 
\[
\min\left\lbrace \ell_1 \log b, (n_1 - n_3)\log \alpha \right\rbrace = (n_1 - n_3)\log \alpha,
\]

\noindent
\textbf{Case 1B:} 
\[
\min\left\lbrace \ell_1 \log b, (n_1 - n_3)\log \alpha \right\rbrace = \ell_1 \log b.
\]

\noindent
We will treat these sub-cases separately in the following steps.
\begin{step}\label{step3.3} 
We now consider Case~1A and show that, under the assumptions
\[
(n_1 - n_2)\log \alpha < 1.5 \times 10^{14} \, \log^2 b \, \log(n_1 + 2) \quad \text{and} \quad
(n_1 - n_3)\log \alpha < 7.48 \times 10^{27} \, \log^3 b \, \log^2(n_1 + 2),
\]
it follows that
\[
\ell_1 \log b < 1.76 \times 10^{41} \, \log^4 b \, \log^3(n_1 + 2).
\]
\end{step}
Combining~\eqref{eq2} and~\eqref{eq4}, we deduce
$$
\frac{\alpha^{n_{1}}-\beta^{n_{1}}}{\sqrt{5}}+\frac{\alpha^{n_{2}}-\beta^{n_{2}}}{\sqrt{5}}+\frac{\alpha^{n_{3}}-\beta^{n_{3}}}{\sqrt{5}}=\frac{1}{b-1}\left(d_1b^{\ell_{1}+\ell_{2}+\ell_{3}}-(d_1-d_2)b^{\ell_{2}+\ell_{3}}-(d_2-d_3)b^{\ell_{3}}-d_3 \right)
$$
which implies 
\begin{align*}
&(b-1)\alpha^{n_{1}}+(b-1)\alpha^{n_{2}}+(b-1)\alpha^{n_{3}}-\sqrt{5}d_{1}b^{\ell_{1}+\ell_{2}+\ell_{3}}\\&= -\sqrt{5}\left((d_1-d_2)b^{\ell_{2}+\ell_{3}}+(d_2-d_3)b^{\ell_{3}}+d_3\right)\\
&+(b-1)\left( \beta^{n_1}+\beta^{n_2}+\beta^{n_3}\right)  
\end{align*}
and 
\begin{align*}
&\left|(b-1)\alpha^{n_{1}}\left( 1+\alpha^{n_2-n_{1}}+\alpha^{n_3-n_{1}}\right) -\sqrt{5}d_{1}b^{\ell_{1}+\ell_{2}+\ell_{3}}\right|\\&< \sqrt{5}\left((b-1)b^{\ell_{2}+\ell_{3}}+(b-1)b^{\ell_{3}}+(b-1)\right)\\
& +3(b-1)\alpha^{-n_3}  
< 4b^{\ell_{2}+\ell_{3}+1}.
\end{align*}
We then deduce
\begin{equation}
\label{eq12}
\left|(b-1)\alpha^{n_{1}}\left( 1+\alpha^{n_2-n_{1}}+\alpha^{n_3-n_{1}}\right) -\sqrt{5}d_{1}b^{\ell_{1}+\ell_{2}+\ell_{3}}\right|< 4b^{\ell_{2}+\ell_{3}+1}.
\end{equation}
Dividing both sides of \eqref{eq12} by $\sqrt{5}d_1b^{\ell_{1}+\ell_{2}+\ell_{3}}$, we obtain
\begin{align}
\label{eq13}
\left|\left( \frac{(b-1)\left( 1+\alpha^{n_2-n_1}+\alpha^{n_3-n_1}\right) }{\sqrt{5}d_1}\right) .\alpha^{n_1}.b^{-(\ell_{1}+\ell_{2}+\ell_{3})} -1\right| &<1.8b^{-\ell_{1}+1}.
\end{align}
Let set the linear form 
$$
\Gamma_3=\log\left( \frac{(b-1)\left( 1+\alpha^{n_2-n_1}+\alpha^{n_3-n_1}\right) }{\sqrt{5}d_1}\right)+n_1\log\alpha-(\ell_{1}+\ell_{2}+\ell_{3})\log b
$$
and put 
\begin{equation}
\label{eq14}
\Lambda_3:= \mathrm{e}^{\Gamma_3}-1= \left( \frac{(b-1)\left( 1+\alpha^{n_2-n_1}+\alpha^{n_3-n_1}\right) }{\sqrt{5}d_1}\right) .\alpha^{n_1}.b^{-(\ell_{1}+\ell_{2}+\ell_{3})} -1.
\end{equation}
We apply Theorem \ref{thrm3} to inequality \eqref{eq14} by taking $B=n_1+2$, $$
\eta_{1}:=  \left( \frac{(b-1)\left( 1+\alpha^{n_2-n_1}+\alpha^{n_3-n_1}\right) }{\sqrt{5}d_1}\right), \eta_{2}:= \alpha, \eta_{3}:=b, b_1=1, b_2:= n_1, b_3:=-(\ell_{1}+\ell_{2}+\ell_{3}), s:=3.
$$
With this choice we have $h'(\eta_{1})<3.75\times10^{27}\log^3b\log^2(n_1+2)$ and $\Lambda_3\neq0$. We obtain 
$$
\ell_{1}\log b<1.76\times10^{41}\log^4b\log^3(n_1+2).
$$
\begin{step}\label{step3.4} 
We consider Case 1A and show tat under the assumption that 
	$$
	(n_1-n_2)\log\alpha<1.5\times10^{14}\log^2b\log(n_1+2),\,\,  (n_1-n_3)\log\alpha<7.48\times10^{27}\log^3b\log^2(n_1+2) \text{ and }
	$$  
	
	$$
	\ell_{1}\log b<1.76\times10^{41}\log^4b\log^3(n_1+2),
	$$
	
	we obtain that
	$$
	\ell_{2}\log b< 8.27\times10^{54}\log^5b\log^4(n_1+2).
	$$
\end{step}
Using \eqref{eq2} and \eqref{eq4}, we obtain
\begin{align*}
(b-1)\alpha^{n_{1}}&+(b-1)\alpha^{n_{2}}+(b-1)\alpha^{n_{3}}-\sqrt{5}\left( d_{1}b^{\ell_{1}}-(d_1-d_2)\right)b^{\ell_{2}+\ell_{3}}\\ &= -\sqrt{5}\left((d_2-d_3)b^{\ell_{3}}+d_3\right)+(b-1)\left( \beta^{n_1}+\beta^{n_2}+\beta^{n_3}\right)  
\end{align*}
and the following estimates
\begin{align*}
&\left|(b-1)\alpha^{n_{1}}\left( 1+\alpha^{n_2-n_{1}}+\alpha^{n_3-n_{1}}\right) -\sqrt{5}\left( d_{1}b^{\ell_{1}}-(d_1-d_2)\right)b^{\ell_{2}+\ell_{3}}\right|\\&< \sqrt{5}(b-1)\left(b^{\ell_{3}}+1\right)
 +3(b-1)\alpha^{-n_3}  \\
&< 3.36(b-1)b^{\ell_{3}}.
\end{align*}
Dividing both sides of the above inequality by $ (b-1)\alpha^{n_{1}}\left( 1+\alpha^{n_2-n_{1}}+\alpha^{n_3-n_{1}}\right)$, we get
\begin{align*}
\left| \frac{\sqrt{5}\left( d_1b^{\ell_{1}}-(d_1-d_2)\right) }{(b-1)(1+\alpha^{n_2-n_1}+\alpha^{n_3-n_1})}\alpha^{-n_1}b^{\ell_{2}+\ell_{3}}-1\right|&< \frac{3.36b^{\ell_{3}}}{\alpha^{n_1}\left( 1+\alpha^{n_2-n_{1}}+\alpha^{n_3-n_{1}}\right)}\\
&<\frac{3.36b^{\ell_{3}}}{\alpha^{n_1}}.
\end{align*}
Using \eqref{eq5} we have $\frac{1}{\alpha^{n_{1}}}<\frac{3}{\alpha b^{\ell_{1}+\ell_{2}+\ell_{3}-1}}<\frac{1.86}{b^{\ell_{1}+\ell_{2}+\ell_{3}-1}}.$ Therefore the above inequality implies 
\begin{align*}
\left| \frac{\sqrt{5}\left( d_1b^{\ell_{1}}-(d_1-d_2)\right) }{(b-1)(1+\alpha^{n_2-n_1}+\alpha^{n_3-n_1})}\alpha^{-n_1}b^{\ell_{2}+\ell_{3}}-1\right|&< \frac{6.25b^{-(\ell_{2}-1)}}{b^{\ell_{1}}},
\end{align*}
which leads to
\begin{align}
\label{eq15}
|\Lambda_4|=\left| \frac{\sqrt{5}\left( d_1b^{\ell_{1}}-(d_1-d_2)\right) }{(b-1)(1+\alpha^{n_2-n_1}+\alpha^{n_3-n_1})}\alpha^{-n_1}b^{\ell_{2}+\ell_{3}}-1\right|<3.13b^{-(\ell_{2}-1)} \text{ since } b\ge 2.
\end{align}
We apply Theorem \ref{thrm3} to inequality (\ref{eq15}) by taking $B=n_1+2$, $$
\eta_{1}:=  \frac{\sqrt{5}\left( d_1b^{\ell_{1}}-(d_1-d_2)\right) }{(b-1)(1+\alpha^{n_2-n_1}+\alpha^{n_3-n_1})}, \eta_{2}:= \alpha, \eta_{3}:=b, b_1=1, b_2:= -n_1, b_3:=\ell_{2}+\ell_{3}, s:=3.
$$
Note that $h'(\eta_{1})<1.77\times10^{41}\log^{4}b\log^3(n_1+2)$ and $|\Lambda_4|\neq0$. Therefore,  we get
$$
\ell_{2}\log b< 8.27\times10^{54}\log^5b\log^4(n_1+2).
$$

\begin{step}\label{step3.5}
We now consider Case~1B and show that, under the assumptions
\[
(n_1 - n_2)\log\alpha < 1.5 \times 10^{14} \, \log^2 b \, \log(n_1 + 2), \quad
\ell_1 \log b < 7.48 \times 10^{27} \, \log^3 b \, \log^2(n_1 + 2),
\]
it follows that
\[
\min\left\lbrace \ell_2 \log b, \, (n_1 - n_3)\log \alpha \right\rbrace
< 3.6 \times 10^{41} \, \log^4 b \, \log^3(n_1 + 2).
\]
\end{step}

\noindent
From~\eqref{eq2} and~\eqref{eq4}, we deduce
\begin{align*}
&(b-1)\alpha^{n_1} + (b-1)\alpha^{n_2} - \sqrt{5}\bigl(d_1 b^{\ell_1} - (d_1 - d_2)\bigr) b^{\ell_2 + \ell_3} \\
&\quad = -\sqrt{5}\bigl((d_2 - d_3)b^{\ell_3} + d_3\bigr)
 - \sqrt{5}(b-1) F_{n_3} + (b-1)(\beta^{n_1} + \beta^{n_2}),
\end{align*}
which implies
\begin{align*}
&\Bigl|(b-1)\alpha^{n_1}\bigl( 1 + \alpha^{n_2 - n_1} \bigr) - \sqrt{5}\bigl(d_1 b^{\ell_1} - (d_1 - d_2)\bigr) b^{\ell_2 + \ell_3} \Bigr| \\
&\quad < \sqrt{5}(b-1)(b^{\ell_3} + 1) + \sqrt{5}(b-1)\alpha^{n_3 - 1} + 2(b-1)\alpha^{-n_3} \\
&\quad < 1.5 \sqrt{5} (b-1) b^{\ell_3} + 1.39 (b-1) \alpha^{n_3} \\
&\quad < 4.75 (b-1) \max\{ b^{\ell_3}, \alpha^{n_3} \}.
\end{align*}

\noindent
Dividing both sides by $(b-1)\alpha^{n_1}(1 + \alpha^{n_2 - n_1})$, we obtain
\begin{align*}
\Biggl| \frac{\sqrt{5} \bigl(d_1 b^{\ell_1} - (d_1 - d_2)\bigr)}{(b-1)(1 + \alpha^{n_2 - n_1})} 
\alpha^{-n_1} b^{\ell_2 + \ell_3} - 1 \Biggr| 
&< \max \Bigl\lbrace \frac{4.75 b^{\ell_3}}{\alpha^{n_1}}, \, \alpha^{-(n_1 - n_3)} \Bigr\rbrace.
\end{align*}

\noindent
Applying the bound 
\[
\frac{1}{\alpha^{n_1}} < \frac{1.86}{b^{\ell_1 + \ell_2 + \ell_3 - 1}},
\] 
we then deduce
\begin{align}
\label{eq16}
|\Lambda_5| = \Biggl| \frac{\sqrt{5}\bigl(d_1 b^{\ell_1} - (d_1 - d_2)\bigr)}{(b-1)(1 + \alpha^{n_2 - n_1})} 
\alpha^{-n_1} b^{\ell_2 + \ell_3} - 1 \Biggr|
< 4.42 \, \max \bigl\lbrace b^{-(\ell_2 - 1)}, \, \alpha^{-(n_1 - n_3)} \bigr\rbrace.
\end{align}

\noindent
We apply Theorem~\ref{thrm3} to inequality~\eqref{eq16} with parameters
\[
B = n_1 + 2, \quad
\eta_1 := \frac{\sqrt{5} (d_1 b^{\ell_1} - (d_1 - d_2))}{(b-1)(1 + \alpha^{n_2 - n_1})}, \quad
\eta_2 := \alpha, \quad
\eta_3 := b, \quad
b_1 = 1, \quad
b_2 := -n_1,  
\]
$b_3 := \ell_2 + \ell_3,$ and    $s := 3.$
\noindent
In this case, we have $h'(\eta_1) < 7.5 \times 10^{27} \, \log^3 b \, \log^2(n_1 + 2)$ and $|\Lambda_5| \neq 0$.  
Hence, we conclude that
\[
\min \bigl\lbrace \ell_2 \log b, \, (n_1 - n_3)\log \alpha \bigr\rbrace 
< 3.6 \times 10^{41} \, \log^4 b \, \log^3(n_1 + 2).
\]

We now consider the following sub-sub-cases for Case~1B:

\noindent
\textbf{Case 1B-A:} 
\[
\min\left\lbrace \ell_2 \log b, \, (n_1 - n_3)\log \alpha \right\rbrace = (n_1 - n_3)\log \alpha,
\]

\noindent
\textbf{Case 1B-B:} 
\[
\min\left\lbrace \ell_2 \log b, \, (n_1 - n_3)\log \alpha \right\rbrace = \ell_2 \log b.
\]

\noindent
We will treat these sub-sub-cases separately in the following steps.

\begin{step}\label{step3.6}
We consider Case~1B-A and show that, under the assumptions
\[
(n_1 - n_2)\log \alpha < 1.5 \times 10^{14} \, \log^2 b \, \log(n_1+2), \quad
\ell_1 \log b < 7.48 \times 10^{27} \, \log^3 b \, \log^2(n_1+2),
\]
\[
(n_1 - n_3)\log \alpha < 3.6 \times 10^{41} \, \log^4 b \, \log^3(n_1+2)
\]
it follows that
\[
\ell_2 \log b < 8.7 \times 10^{54} \, \log^5 b \, \log^4(n_1+2).
\]
\end{step}

\noindent
From~\eqref{eq2} and~\eqref{eq4}, we deduce
\begin{align*}
&(b-1)\alpha^{n_1} + (b-1)\alpha^{n_2} + (b-1)\alpha^{n_3} - \sqrt{5} \bigl(d_1 b^{\ell_1} - (d_1 - d_2) \bigr) b^{\ell_2 + \ell_3} \\
&\quad = - \sqrt{5} \bigl((d_2 - d_3) b^{\ell_3} + d_3 \bigr) + (b-1)(\beta^{n_1} + \beta^{n_2} + \beta^{n_3}),
\end{align*}
which implies
\begin{align*}
&\Bigl|(b-1)\alpha^{n_1} \bigl( 1 + \alpha^{n_2 - n_1} + \alpha^{n_3 - n_1} \bigr) - \sqrt{5} \bigl(d_1 b^{\ell_1} - (d_1 - d_2) \bigr) b^{\ell_2 + \ell_3} \Bigr| \\
&\quad < \sqrt{5}(b-1)(b^{\ell_3} + 1) + 3(b-1) \alpha^{-n_3} \\
&\quad < 3.36 \, b^{\ell_3 + 1}.
\end{align*}

\noindent
Dividing both sides by $\sqrt{5} \bigl(d_1 b^{\ell_1} - (d_1 - d_2) \bigr) b^{\ell_2 + \ell_3}$, we obtain
\begin{align*}
\Biggl| \left(\frac{(b-1) \bigl(1 + \alpha^{n_2 - n_1} + \alpha^{n_3 - n_1} \bigr)}{\sqrt{5} \bigl(d_1 b^{\ell_1} - (d_1 - d_2) \bigr)} \right) 
\alpha^{n_1} b^{-(\ell_2 + \ell_3)} - 1 \Biggr|
< \frac{3.36 \, b^{\ell_3 + 1}}{\sqrt{5} \bigl(d_1 b^{\ell_1} - (d_1 - d_2) \bigr) b^{\ell_2 + \ell_3}}.
\end{align*}

\noindent
Hence, we obtain
\begin{align}
\label{eq17}
|\Lambda_6| = \Biggl| \left(\frac{(b-1) \bigl(1 + \alpha^{n_2 - n_1} + \alpha^{n_3 - n_1} \bigr)}{\sqrt{5} \bigl(d_1 b^{\ell_1} - (d_1 - d_2) \bigr)} \right) 
\alpha^{n_1} b^{-(\ell_2 + \ell_3)} - 1 \Biggr| < 1.6 \, b^{-\ell_2 + 1}.
\end{align}

\noindent
Applying Theorem~\ref{thrm3} to inequality~\eqref{eq17} with parameters
\[
B = n_1 + 2, \quad 
\eta_1 := \frac{(b-1)(1 + \alpha^{n_2 - n_1} + \alpha^{n_3 - n_1})}{\sqrt{5} (d_1 b^{\ell_1} - (d_1 - d_2))}, \quad 
\eta_2 := \alpha, \quad 
\eta_3 := b, \quad 
b_1 = 1, \quad 
b_2 := n_1, 
\]
$$
b_3 := -(\ell_2 + \ell_3), \quad 
s := 3,
$$
and noting that $h'(\eta_1) < 1.85 \times 10^{41} \, \log^4 b \, \log^3(n_1+2)$ and $|\Lambda_6| \neq 0$, 
we conclude that
\[
\ell_2 \log b < 8.7 \times 10^{54} \, \log^5 b \, \log^4(n_1+2).
\]

\begin{step}\label{step3.7}
We consider Case~1B-B and show that, under the assumptions
\[
(n_1 - n_2)\log \alpha < 1.5 \times 10^{14} \, \log^2 b \, \log(n_1+3), \quad
\ell_1 \log b < 7.48 \times 10^{27} \, \log^3 b \, \log^2(n_1+2),
\]
$$
 \ell_2 \log b < 3.6 \times 10^{41} \, \log^4 b \, \log^3(n_1+2),
$$
it follows that
\[
(n_1 - n_3)\log \alpha < 8.7 \times 10^{54} \, \log^5 b \, \log^4(n_1+2).
\]
\end{step}

\noindent
From~\eqref{eq2} and~\eqref{eq4}, we deduce
\begin{align*}
&(b-1)\alpha^{n_1} + (b-1)\alpha^{n_2} - \sqrt{5} \bigl(d_1 b^{\ell_1 + \ell_2} - (d_1 - d_2) b^{\ell_2} - (d_2 - d_3) \bigr) b^{\ell_3} \\
&\quad = -\sqrt{5} d_3 - \sqrt{5}(b-1) F_{n_3} + (b-1)(\beta^{n_1} + \beta^{n_2}),
\end{align*}
which implies
\begin{align*}
&\Bigl| (b-1)\alpha^{n_1} (1 + \alpha^{n_2 - n_1}) - \sqrt{5} \bigl(d_1 b^{\ell_1 + \ell_2} - (d_1 - d_2) b^{\ell_2} - (d_2 - d_3) \bigr) b^{\ell_3} \Bigr| \\
&\quad < \sqrt{5} (b-1) + \sqrt{5} (b-1) \alpha^{n_3 - 1} + 2 (b-1) \alpha^{-n_3} \\
&\quad < 1.4 (b-1) \alpha^{n_3}.
\end{align*}

\noindent
Dividing both sides by $(b-1) \alpha^{n_1} (1 + \alpha^{n_2 - n_1})$, we obtain
\begin{align}
\label{eq18}
|\Lambda_7| = \Biggl| \frac{\sqrt{5} \bigl(d_1 b^{\ell_1 + \ell_2} - (d_1 - d_2) b^{\ell_2} - (d_1 - d_3) \bigr)}{(b-1)(1 + \alpha^{n_2 - n_1})} 
\alpha^{-n_1} b^{\ell_3} - 1 \Biggr| < 1.4 \, \alpha^{-(n_1 - n_3)}.
\end{align}

\noindent
Applying Theorem~\ref{thrm3} to inequality~\eqref{eq18} with parameters
\[
B = n_1 + 2, \quad 
\eta_1 := \frac{\sqrt{5} (d_1 b^{\ell_1 + \ell_2} - (d_1 - d_2) b^{\ell_2} - (d_1 - d_3))}{(b-1)(1 + \alpha^{n_2 - n_1})}, \quad
\eta_2 := \alpha, \quad
\eta_3 := b, \quad
b_1 = 1, 
\]
$$
b_2 := -n_1, \quad   b_3 := \ell_3, \quad
s := 3,
$$
and noting that $h'(\eta_1) < 1.85 \times 10^{41} \, \log^4 b \, \log^3(n_1+2)$ and $|\Lambda_7| \neq 0$, we conclude that
\[
(n_1 - n_3)\log \alpha < 8.7 \times 10^{54} \, \log^5 b \, \log^4(n_1+2).
\]

\begin{step}
	\label{step3.8} 
We consider Case 2 and show that under the assumption that 
$$
\ell_{1}\log b<1.5\times10^{14}\log^2b\log(n_1+2),
$$

 we obtain that
 
$$
\min\{(n_1-n_2)\log\alpha, \ell_{2}\log b\}<7.48\times10^{27}\log^3b\log^2(n_1+2).
$$
\end{step}
Using (\ref{eq2}) and (\ref{eq4}), we obtain
\begin{align*}
(b-1)\alpha^{n_{1}}-\sqrt{5}\left( d_{1}b^{\ell_{1}}-(d_1-d_2)\right)b^{\ell_{2}+\ell_{3}} &= -\sqrt{5}\left((d_2-d_3)b^{\ell_{3}}+d_3\right)\\
&-\sqrt{5}(b-1)(F_{n_2}+F_{n_3})+(b-1) \beta^{n_1}  \\
\left|(b-1)\alpha^{n_{1}} -\sqrt{5}\left( d_{1}b^{\ell_{1}}-(d_1-d_2)\right)b^{\ell_{2}+\ell_{3}}\right|&< \sqrt{5}(b-1)\left(b^{\ell_{3}}+1\right)\\
& +2\sqrt{5}(b-1)\alpha^{n_2-1}+(b-1)\alpha^{-n_1}  \\
&< 1.5\sqrt{5}(b-1)b^{\ell_{3}}+2.77(b-1)\alpha^{n_2}\\
&<6.13(b-1)\max\{b^{\ell_{3}}, \alpha^{n_2}\}
\end{align*}
Dividing both sides of the above inequality by $ (b-1)\alpha^{n_{1}}$, we get
\begin{align*}
\left|\frac{\sqrt{5}\left( d_1b^{\ell_{1}}-(d_1-d_2)\right)}{b-1}.\alpha^{-n_1}.b^{\ell_{2}+\ell_{3}}-1  \right|&< \max\left\lbrace \frac{6.13b^{\ell_{3}}}{\alpha^{n_1}},\alpha^{-(n_1-n_2)}\right\rbrace. \\
\end{align*}
Using the fact that  $\frac{1}{\alpha^{n_{1}}}<\frac{1.86}{b^{\ell_{1}+\ell_{2}+\ell_{3}-1}}$, we obtain 
\begin{align}
\label{eq19}
\left|\Lambda'_{1} \right| =\left|\frac{\sqrt{5}\left( d_1b^{\ell_{1}}-(d_1-d_2)\right)}{b-1}.\alpha^{-n_1}.b^{\ell_{2}+\ell_{3}}-1  \right|< 5.7\max\left\lbrace b^{-(\ell_{2}-1)}, \alpha^{-(n_1-n_2)}\right\rbrace 
\end{align}
We apply Theorem \ref{thrm3} to inequality (\ref{eq19}) by taking $B=n_1+2$
$$
\eta_{1}:=  \left(\frac{\sqrt{5}\left( d_1b^{\ell_{1}}-(d_1-d_2)\right)}{b-1} \right), \eta_{2}:= \alpha, \eta_{3}:=b, b_1=1, b_2:= -n_1, b_3:=\ell_{3}, s:=3.
$$
In this case we have $h'(\eta_{1})<1.6\times10^{14}\log^{2}b\log(n_1+2).$ and $|\Lambda'_1|\neq0$. \\ Therefore,  we get
$$
\min\{(n_1-n_2)\log\alpha, \ell_{2}\log b\}<7.48\times10^{27}\log^3b\log^2(n_1+2).
$$
At this stage, we consider the following two sub-cases for Case~2:

\noindent
 \textbf{Case 2A:}
\[
  \min\{(n_1-n_2)\log \alpha, \, \ell_2 \log b\} = \ell_2 \log b
\]
\noindent
     \textbf{Case 2B:} 
\[     
     \min\{(n_1-n_2)\log \alpha, \, \ell_2 \log b\} = (n_1-n_2) \log \alpha.
 \]    
We will analyze these sub-cases in the subsequent steps.

\begin{step}
	\label{step3.9} 
We consider Case 2A and show that under the assumption that 
$$
\ell_{1}<1.5\times10^{14}\log^2b\log(n_1+2) \text{ and } \ell_{2}\log\alpha<7.48\times10^{27}\log^3b\log^2(n_1+2),
$$
 we obtain that
$$
(n_1-n_2)\log\alpha<1.76\times10^{41}\log^4b\log^3(n_1+2). 
$$
\end{step}
Using \eqref{eq2} and \eqref{eq4}, we obtain
\begin{align*}
(b-1)\alpha^{n_{1}} 
&- \sqrt{5}\Bigl( d_{1}b^{\ell_{1}+\ell_{2}} - (d_1-d_2)b^{\ell_{2}} - (d_2-d_3) \Bigr) b^{\ell_{3}} \\
&= -\sqrt{5}\, d_3 - \sqrt{5}(b-1)(F_{n_2}+F_{n_3}) + (b-1)\beta^{n_1}, \\[2mm]
\Bigl|(b-1)\alpha^{n_{1}} 
&- \sqrt{5}\Bigl( d_{1}b^{\ell_{1}+\ell_{2}} - (d_1-d_2)b^{\ell_{2}} - (d_2-d_3) \Bigr) b^{\ell_{3}}\Bigr| \\
&< \sqrt{5}(b-1) + 2\sqrt{5}(b-1)\alpha^{n_2-1} + (b-1)\alpha^{-n_1}, \\[1mm]
&< 2.77 (b-1) \alpha^{n_2}.
\end{align*}
Dividing both sides of the above inequality by $ (b-1)\alpha^{n_{1}}$, we get
\begin{align}
\label{eq20}
\left|\Lambda'_{2} \right| =\left|\frac{\sqrt{5}\left( d_1b^{\ell_{1}+\ell_{2}}-(d_1-d_2)b^{\ell_{2}}-(d_2-d_3)\right)}{b-1}.\alpha^{-n_1}.b^{\ell_{3}}-1  \right|< 2.77\alpha^{-(n_1-n_2)}
\end{align}
We apply Theorem \ref{thrm3} to inequality \eqref{eq20} by taking $B=n_1+2$
$$
\eta_{1}:=  \left(\frac{\sqrt{5}\left( d_1b^{\ell_{1}+\ell_{2}}-(d_1-d_2)b^{\ell_{2}}-(d_2-d_3)\right)}{b-1}\right), \eta_{2}:= \alpha, \eta_{3}:=b, b_1=1, b_2:= -n_1, 
$$
$ b_3:=\ell_{3}, s:=3.$ In this case we have $h'(\eta_{1})<3.75\times10^{27}\log^{3}b\log^2(n_1+2)$ and $|\Lambda'_2|\neq0$. Therefore,  we get
$$
(n_1-n_2)\log\alpha<1.76\times10^{41}\log^4b\log^3(n_1+2). 
$$

\begin{step}
		\label{step3.10} 
	We consider Case 2A and show that under the assumption that 
	$$
	\ell_{1}<1.5\times10^{14}\log^2b\log(n_1+2), \,\,\, \ell_{2}\log\alpha<7.48\times10^{27}\log^3b\log^2(n_1+2) \text{ and }
	$$ 
	
	$$
	(n_1-n_2)\log\alpha<1.76\times10^{41}\log^4b\log^3(n_1+2),
	$$
	we obtain that
	$$
	(n_1-n_3)\log\alpha<8.27\times10^{54}\log^5b\log^4(n_1+2). 
	$$
\end{step}
Using \eqref{eq2} and \eqref{eq4}, we obtain
\begin{align*}
(b-1)\alpha^{n_{1}} + (b-1)\alpha^{n_{2}}
&- \sqrt{5} \Bigl( d_{1}b^{\ell_{1}+\ell_{2}} - (d_1-d_2)b^{\ell_{2}} - (d_2-d_3) \Bigr) b^{\ell_{3}} \\
&= -\sqrt{5}\, d_3 - \sqrt{5}(b-1) F_{n_3} + (b-1)(\beta^{n_1} + \beta^{n_2}), \\[1mm]
\Bigl| (b-1)\alpha^{n_{1}} \bigl( 1 + \alpha^{n_2-n_1} \bigr) 
&- \sqrt{5} \Bigl( d_{1}b^{\ell_{1}+\ell_{2}} - (d_1-d_2)b^{\ell_{2}} - (d_2-d_3) \Bigr) b^{\ell_{3}} \Bigr| \\
&< \sqrt{5}(b-1) + \sqrt{5}(b-1)\alpha^{n_3-1} + 2(b-1)\alpha^{-n_3}, \\[1mm]
&< 1.4 (b-1) \alpha^{n_3}.
\end{align*}

Dividing both sides of the above inequality by $(b-1)\alpha^{n_1}\left( 1 + \alpha^{n_2-n_1} \right)$, we get
\begin{align}
\label{eq21}
|\Lambda'_3|=\left|\frac{\sqrt{5}\left(d_1b^{\ell_{1}+\ell_{2}}-(d_1-d_2)b^{\ell_{2}}-(d_1-d_3) \right) }{(b-1)\left( 1+\alpha^{n_2-n_1}\right) }.\alpha^{-n_1}.b^{\ell_{3}} -1 \right|<1.4.b^{-(n_1-n_3)}.
\end{align}
and therefore
$$
(n_1-n_3)\log\alpha<8.27\times10^{54}\log^5b\log^4(n_1+2). 
$$
\begin{step}
\label{step3.11}
We consider Case 2B and prove that, under the assumptions
$$
\ell_{1}<1.5\times10^{14}\log^2 b\,\log(n_1+2)
\quad \text{and} \quad
(n_1-n_2)\log\alpha<7.48\times10^{27}\log^3 b\,\log^2(n_1+2),
$$
one obtains the bound
$$
\min\{(n_1-n_3)\log\alpha,\ \ell_{2}\log b\}
<3.6\times10^{41}\log^4 b\,\log^3(n_1+2).
$$
\end{step}

We note that Step \ref{step3.11} is analogous to Step \ref{step3.5}, with the roles of $\ell_{1}\log b$ and $(n_1-n_2)\log\alpha$ interchanged. We now divide Case 2B into the following sub-subcases.

\noindent
\textbf{Case 2B-A:}
\[
  \min\{(n_1-n_3)\log\alpha,\ \ell_{2}\log b\}=(n_1-n_3)\log\alpha
\]
\noindent
\textbf{Case 2B-B:}
\[
  \min\{(n_1-n_3)\log\alpha,\ \ell_{2}\log b\}=\ell_{2}\log b.
\]
These sub-subcases will be treated in the subsequent steps. By applying the same arguments as in Steps \ref{step3.6} and \ref{step3.7}, we obtain the results stated in Steps \ref{step3.12} and \ref{step3.13}.

\begin{step}
\label{step3.12}
We consider Case 2B-A and prove that, under the assumptions
$$
\ell_{1}\log b<1.5\times10^{14}\log^2 b\,\log(n_1+2), \qquad
(n_1-n_2)\log\alpha<7.48\times10^{27}\log^3 b\,\log^2(n_1+2),
$$
and
$$
(n_1-n_3)\log\alpha<3.6\times10^{41}\log^4 b\,\log^3(n_1+2),
$$
one obtains
\begin{align*}
\ell_{2}\log b< 8.7\times10^{54}\log^5 b\,\log^4(n_1+2).
\end{align*}

We apply again Theorem \ref{thrm3} to derive an upper bound for $\ell_{2}\log b$. 
The argument is entirely analogous to Case 1B-A. In particular, we obtain
\begin{align*}
|\Lambda_6|
=\left|
\left(\frac{(b-1)\bigl(1+\alpha^{n_2-n_1}+\alpha^{n_3-n_1}\bigr)}
{\sqrt{5}\bigl(d_1b^{\ell_{1}}-(d_1-d_2)\bigr)}\right)
\alpha^{n_1}b^{\ell_{2}+\ell_{3}}-1
\right|
<1.6\,b^{-\ell_{2}+1},
\end{align*}
with the same parameter setting as in Case 1B-A. Consequently, 
Theorem \ref{thrm3} yields
\begin{align*}
\ell_{2}\log b< 8.7\times10^{54}\log^5 b\,\log^4(n_1+2).
\end{align*}
\end{step}

\begin{step}
\label{step3.13}
We consider Case 2B-B and prove that, under the assumptions
$$
\ell_{1}\log b<1.5\times10^{14}\log^2 b\,\log(n_1+2), \qquad
(n_1-n_2)\log\alpha<7.48\times10^{27}\log^3 b\,\log^2(n_1+2),
$$
and
$$
\ell_{2}\log b<3.6\times10^{41}\log^4 b\,\log^3(n_1+2),
$$
one obtains
\begin{align*}
(n_1-n_3)\log\alpha
<8.7\times10^{54}\log^5 b\,\log^4(n_1+2).
\end{align*}

We again apply Theorem \ref{thrm3} to derive an upper bound for
$(n_1-n_3)\log\alpha$. The argument is entirely analogous to that of
Case~1B-B. In particular, we obtain
\begin{align*}
\left|
\frac{\sqrt{5}\bigl(d_1b^{\ell_{1}+\ell_{2}}
-(d_1-d_2)b^{\ell_{2}}-(d_1-d_3)\bigr)}
{(b-1)\bigl(1+\alpha^{n_2-n_1}\bigr)}
\alpha^{-n_1}b^{\ell_{3}}-1
\right|
<1.4\,\alpha^{-(n_1-n_3)},
\end{align*}
with the same parameter setting as in Case~1B-B. Consequently,
Theorem~\ref{thrm3} yields
\begin{align*}
(n_1-n_3)\log\alpha
<8.7\times10^{54}\log^5 b\,\log^4(n_1+2).
\end{align*}
\end{step}

Table~\ref{table1} summarizes the bounds obtained so far.

\begin{table}[htbp]
\centering
\footnotesize
\renewcommand{\arraystretch}{1.25}

\begin{tabular}{|c|c|c|c|}
\hline
\textbf{Upper bound of}
& \textbf{Case 1A}
& \textbf{Case 1B-A}
& \textbf{Case 1B-B} \\
\hline
$(n_1-n_2)\log\alpha$
& $1.5\times10^{14}\log^2 b\,\log(n_1+2)$
& $1.5\times10^{14}\log^2 b\,\log(n_1+2)$
& $1.5\times10^{14}\log^2 b\,\log(n_1+2)$ \\
\hline
$(n_1-n_3)\log\alpha$
& $7.48\times10^{27}\log^3 b\,\log^2(n_1+2)$
& $8.7\times10^{54}\log^5 b\,\log^4(n_1+2)$
& $3.6\times10^{41}\log^4 b\,\log^3(n_1+2)$ \\
\hline
$\ell_{1}\log b$
& $1.7\times10^{41}\log^4 b\,\log^3(n_1+2)$
& $7.48\times10^{27}\log^3 b\,\log^2(n_1+2)$
& $7.48\times10^{27}\log^3 b\,\log^2(n_1+2)$ \\
\hline
$\ell_{2}\log b$
& $8.27\times10^{54}\log^5 b\,\log^4(n_1+2)$
& $3.6\times10^{41}\log^4 b\,\log^3(n_1+2)$
& $8.7\times10^{54}\log^5 b\,\log^4(n_1+2)$ \\
\hline
\end{tabular}

\vspace{0.4cm}

\begin{tabular}{|c|c|c|c|}
\hline
\textbf{Upper bound of}
& \textbf{Case 2A}
& \textbf{Case 2B-A}
& \textbf{Case 2B-B} \\
\hline
$(n_1-n_2)\log\alpha$
& $1.76\times10^{41}\log^4 b\,\log^3(n_1+2)$
& $7.48\times10^{27}\log^3 b\,\log^2(n_1+2)$
& $7.48\times10^{27}\log^3 b\,\log^2(n_1+2)$ \\
\hline
$(n_1-n_3)\log\alpha$
& $8.27\times10^{54}\log^5 b\,\log^4(n_1+2)$
& $3.6\times10^{41}\log^4 b\,\log^3(n_1+2)$
& $8.7\times10^{54}\log^5 b\,\log^4(n_1+2)$ \\
\hline
$\ell_{1}\log b$
& $1.5\times10^{14}\log^2 b\,\log(n_1+2)$
& $1.5\times10^{14}\log^2 b\,\log(n_1+2)$
& $1.5\times10^{14}\log^2 b\,\log(n_1+2)$ \\
\hline
$\ell_{2}\log b$
& $7.48\times10^{27}\log^3 b\,\log^2(n_1+2)$
& $8.7\times10^{54}\log^5 b\,\log^4(n_1+2)$
& $3.6\times10^{41}\log^4 b\,\log^3(n_1+2)$ \\
\hline
\end{tabular}

\caption{Summary of the upper bounds obtained in all cases}
\label{table1}
\end{table}

\begin{step}
\label{step3.14}
Assuming that the bounds given in Table~\ref{table1} hold, we prove that
\[
n_1<1.51\times10^{82}\log^{11} b.
\]
\end{step}

\begin{proof}
Using \eqref{eq2} and \eqref{eq4}, we obtain
{\footnotesize
\begin{align*}
&(b-1)\alpha^{n_{1}}+(b-1)\alpha^{n_{2}}+(b-1)\alpha^{n_{3}}
-\sqrt{5}\left(d_{1}b^{\ell_{1}+\ell_{2}}
-(d_1-d_2)b^{\ell_{2}}-(d_2-d_3)\right)b^{\ell_{3}}  \\
&\qquad= -\sqrt{5}d_3+(b-1)(\beta^{n_1}+\beta^{n_2}+\beta^{n_3}),
\\[0.2cm]
&\Bigl|(b-1)\alpha^{n_{1}}\bigl(1+\alpha^{n_2-n_1}+\alpha^{n_3-n_1}\bigr)
-\sqrt{5}\left(d_{1}b^{\ell_{1}+\ell_{2}}
-(d_1-d_2)b^{\ell_{2}}-(d_2-d_3)\right)b^{\ell_{3}}\Bigr|  \\
&\qquad< \sqrt{5}(b-1)+3(b-1)\alpha^{-n_3}
<2.24(b-1).
\end{align*}
}

Dividing both sides of the above inequality by
\[
(b-1)\alpha^{n_{1}}\bigl(1+\alpha^{n_2-n_1}+\alpha^{n_3-n_1}\bigr),
\]
we obtain
\begin{align}
\label{eq22}
|\Lambda|=
\left|
\frac{\sqrt{5}\left(d_1b^{\ell_{1}+\ell_{2}}
-(d_1-d_2)b^{\ell_{2}}-(d_2-d_3)\right)}
{(b-1)\left(1+\alpha^{n_2-n_1}+\alpha^{n_3-n_1}\right)}
\alpha^{-n_1}b^{\ell_{3}}-1
\right|
<2.24\,\alpha^{-n_1}.
\end{align}

We now apply Theorem~\ref{thrm3} to inequality~\eqref{eq22} with
\[
B=n_1+2,
\quad
\eta_{1}:=
\frac{\sqrt{5}\left(d_1b^{\ell_{1}+\ell_{2}}
-(d_1-d_2)b^{\ell_{2}}-(d_2-d_3)\right)}
{(b-1)\left(1+\alpha^{n_2-n_1}+\alpha^{n_3-n_1}\right)},
\]
\[
\eta_{2}:=\alpha,\qquad
\eta_{3}:=b,\qquad
b_1=1,\quad b_2:=-n_1,\quad b_3:=\ell_{3},\quad s:=3.
\]
In this case we have
\[
h'(\eta_{1})
<8.28\times10^{54}\log^{5} b\,\log^{4}(n_1+2),
\qquad |\Lambda|\neq0.
\]
Therefore,
\[
n_1\log\alpha
<3.86\times10^{68}\log^{6} b\,\log^{5}(n_1+2).
\]
Using the inequality $\log(n_1+2)<2\log n_1$, we obtain
\[
n_1<1.6\times10^{69}\log^{6} b\,\log^{5} n_1.
\]
Next, we apply Lemma~\ref{lem2} with parameters
\[
\ell:=5,\qquad
H:=1.6\times10^{69}\log^{6} b,\qquad
L:=n_1.
\]
Hence,
\[
n_1<
2^{5}\times1.6\times10^{69}\log^{6} b
\left(159.35+6\log\log b\right)^{5}.
\]
This implies
\[
n_1<3.09\times10^{82}\log^{11} b.
\]
Since $b\ge2$, we have
\[
159.35+6\log\log b<227\log b,
\]
and the desired bound follows. Therefore,
Theorem~\ref{thrm1} is proved.
\end{proof}

\section{Proof of theorem \ref{thrm2}}
In the final step, we reduce the large upper bound for $n_1$ obtained in
Theorem~\ref{thrm1} by repeatedly applying Lemma~\ref{lem3}. More precisely,
we explicitly determine all sums of three Fibonacci numbers that are
concatenations of three repdigits in base $b$, for $2\le b\le10$.
Within this range, we deduce that every solution of equation~\eqref{eq1}
satisfies
\[
n_1<3\times10^{86}.
\]

We first consider inequality~\eqref{eq7} and recall that
\[
\Gamma_1
=\log\left(\frac{b-1}{d_1\sqrt{5}}\right)
+n_1\log\alpha
-(\ell_{1}+\ell_{2}+\ell_{3})\log b.
\]
For technical reasons, we assume that
\[
\min\{n_1-n_2,\; n_1-n_3,\; \ell_{1},\; \ell_{2}\}\ge24.
\]
If this condition is not satisfied, we proceed according to the following cases:
\begin{enumerate}[$\bullet$]
\item If $n_1-n_2<24$ while $\ell_{1}, \ell_{2}, n_1-n_2 \ge 24$, then we consider inequality~(\ref{eq10}), that is, we move to Step~\ref{step3.2}.

\item If $n_1-n_2<24$ and $n_1-n_3<24$ while $\ell_{1}, \ell_{2}\ge 24$, then we consider inequality~(\ref{eq13}), that is, we move to Step~\ref{step3.3}; afterwards, we consider inequality~(\ref{eq15}), that is, we proceed to Step~\ref{step3.4}.

\item If $n_1-n_2<24$ and $\ell_{1}<24$ while $\ell_{2}, n_1-n_3\ge 24$, then we consider inequality~(\ref{eq16}), that is, we move to Step~\ref{step3.5}, and afterwards proceed to Step~\ref{step3.11}.

\item If $n_1-n_2<24$, $n_1-n_3<24$, and $\ell_{1}<24$ while $\ell_{2}\ge 24$, then we consider inequality~(\ref{eq17}), that is, we move to Step~\ref{step3.6}, and afterwards proceed to Step~\ref{step3.12}.

\item If $n_1-n_2<24$, $\ell_{1}<24$, and $\ell_{2}<24$ while $n_1-n_3\ge 24$, then we consider inequality~(\ref{eq18}), leading to Step~\ref{step3.7} and Step~\ref{step3.13}.

\item If $\ell_{1}<24$ while $n_1-n_2, n_1-n_3, \ell_{2}\ge 24$, then we consider inequality~(\ref{eq19}), which leads to Step~\ref{step3.8}.

\item If $\ell_{1}<24$ and $\ell_{2}<24$ while $n_1-n_2, n_1-n_3\ge 24$, then we consider inequality~(\ref{eq20}), that is, we move to Step~\ref{step3.9}; afterwards, we consider inequality~(\ref{eq21}), that is, we proceed to Step~\ref{step3.10}.

\item If all $n_1-n_2, n_1-n_3, \ell_{1}, \ell_{2}<24$, then we consider inequality~(\ref{eq22}), which leads to Step~\ref{step3.14}.
\end{enumerate}

\begin{step}\label{step4.1}
We prove that $\ell_{1}\le 301$ or $n_1-n_2\le 438$.
\end{step}

We begin by considering inequality~(\ref{eq7}). Since we assume that 
$\min\{n_1-n_2, n_1-n_3, \ell_{1}, \ell_{2}\}\ge 24$, we obtain
$\left| \Lambda_1\right| = \left|\mathrm{e}^{\Gamma_1}-1 \right|<\frac{1}{4}$,
hence $\left|\Lambda_1\right| <\frac{1}{2}$.
Moreover, since the inequality 
$\left| x\right| < 2\left| \mathrm{e}^{x}-1\right|$
holds for all $x$ such that $\left| x\right|<\frac{1}{2}$, we deduce that
\[
\left|\Gamma_1 \right|
<15.7b\max\{b^{-\ell_{1}}, \alpha^{-(n_1-n_2)}\}.
\]
Consequently, we obtain
\[
0<
\left|
(\ell_{1}+\ell_{2}+\ell_{3})
\left( \frac{\log b}{\log\alpha}\right)
-n_1
+\frac{\log\left( \frac{d_1\sqrt{5}}{b-1}\right)}{\log\alpha}
\right|
<
\max\left\{
32.63b^{-(\ell_{1}-1)},\,
32.63b\, \alpha^{-(n_1-n_2)}
\right\}.
\]

To apply Lemma~\ref{lem3} to the above inequality, we choose
\[
\tau:= \frac{\log b}{\log\alpha}, \quad
\mu:= \frac{\log\left( \frac{d_1\sqrt{5}}{b-1}\right)}{\log\alpha},
\]
\[
(A, B)=(32.63, b) \ \text{or}\ (32.63b, \alpha),
\quad
w:= \ell_{1}-1 \ \text{or}\ n_1-n_2,
\quad \text{with } 1\le d_1 \le b-1.
\]

Since $\ell_{1}+\ell_{2}+\ell_{3}<n_1+2<3.1\times10^{86}$,
we may take $M:=3.1\times10^{86}$.
For the remainder of the proof, we use \textit{Mathematica} to apply Lemma~\ref{lem3}.
In the computations, if the first convergent $q_t$ satisfying $q_t>6M$
does not fulfill the condition $\varepsilon>0$,
we consider the next convergent until the condition is satisfied.
Thus we obtain Table~\ref{table2}.

\begin{table}[H]
\centering
\caption{}
\tiny{
\label{table2}
\begin{tabular}{|c|c|c|c|c|c|c|c|c|c|}
\hline
$b$ & 2 & 3 & 4 & 5 & 6 & 7 & 8 & 9 & 10 \\ \hline
$q_t$ & $q_{172}$ & $q_{164}$ & $q_{176}$ & $q_{171}$ & $q_{178}$ & $q_{170}$ & $q_{162}$ & $q_{158}$ & $q_{175}$ \\ \hline
$\varepsilon\ge$ & 0.373 & 0.264 & 0.254 & 0.337 & 0.268 & 0.283 & 0.051 & 0.182 & 0.436 \\ \hline
$\ell_{1}-1$ & 300 & 189 & 150 & 129 & 116 & 107 & 101 & 95 & 90 \\ \hline
$n_1-n_2\le$ & 434 & 434 & 434 & 432 & 433 & 434 & 438 & 435 & 435 \\ \hline
\end{tabular}
}
\end{table}

Therefore,
\[
1\le \ell_{1}\le
\frac{\log\left(32.63q_{172}/0.373 \right)}{\log 2}+1
\le 301,
\]
or
\[
1\le n_1-n_2\le
\frac{\log\left(261.04q_{162}/0.051 \right)}{\log \alpha}+1
\le 438.
\]
Consequently, we distinguish the following cases:

\noindent
\textbf{Case 1:}
\[
  n_1-n_2\le 438
\]
\noindent
\textbf{Case 2:} 
\[
\ell_{1}\le 301.
\]
\begin{step}
We consider Case~1 and prove that, under the assumption $n_1-n_2\le 438$, one has
$n_1-n_3\le 456$ or $\ell_{1}\le 308$.
\end{step}

In this step, we consider inequality~(\ref{eq10}) and assume that
$\ell_{1},\, n_1-n_3 \ge 24$.
Recall that
\[
\Gamma_2=
\log\left(
\frac{(b-1)\left(1+\alpha^{\,n_2-n_1}\right)}{\sqrt{5}\,d_1}
\right)
+n_1\log\alpha
-(\ell_{1}+\ell_{2}+\ell_{3})\log b .
\]
From inequality~(\ref{eq10}) we obtain
\[
|\Gamma_2|
<19.86b\max\left\{b^{-\ell_{1}},\, \alpha^{-(n_1-n_3)}\right\}.
\]
Hence,
\begin{align}
\label{eq23}
0<&\left|
(\ell_{1}+\ell_{2}+\ell_{3})
\left(\frac{\log b}{\log\alpha}\right)
-n_1
+\frac{\log\!\left(
\frac{\sqrt{5}\,d_1}{(b-1)\left(1+\alpha^{-(n_1-n_2)}\right)}
\right)}{\log\alpha}
\right|  \\
&<
\max\left\{
41.28\,b^{-(\ell_{1}-1)},
\,41.28\,b\,\alpha^{-(n_1-n_3)}
\right\}.
\nonumber
\end{align}

Since the hypotheses of Lemma~\ref{lem3} are satisfied,
we apply it to inequality~(\ref{eq23}) with
\[
\tau:=\frac{\log b}{\log\alpha}, \qquad
\mu:=\mu_k=
\frac{\log\!\left(
\frac{\sqrt{5}\,d_1}{(b-1)\left(1+\alpha^{-k}\right)}
\right)}{\log\alpha},
\]
\[
(A,B)=(41.28,b)\ \text{or}\ (41.28b,\alpha),
\qquad
w:=\ell_{1}-1\ \text{or}\ n_1-n_3,
\]
with $1\le d_1\le b-1$ and $k=n_1-n_2\in[0,438]$.
Since $\ell_{1}+\ell_{2}+\ell_{3}<n_1+2<3.1\times10^{86}$,
we take $M:=3.1\times10^{86}$.
Using \textit{Mathematica}, we obtain the following results.

\begin{table}[H]
\centering
\caption{Results for $(A,B)=(41.28,b)$}
\label{table3}
\tiny
\renewcommand{\arraystretch}{1.3}
\resizebox{\textwidth}{!}{
\begin{tabular}{|c|c|c|c|c|c|c|c|c|c|}
\hline
$b$ & 2 & 3 & 4 & 5 & 6 & 7 & 8 & 9 & 10 \\ \hline
$q_t$ & $q_{172}$ & $q_{164}$ & $q_{176}$ & $q_{171}$ & $q_{178}$ & $q_{170}$ & $q_{162}$ & $q_{158}$ & $q_{175}$ \\ \hline
$\varepsilon\ge$ &
$3.79\times10^{-3}$ &
$7.41\times10^{-4}$ &
$8.73\times10^{-4}$ &
$5.69\times10^{-6}$ &
$1.16\times10^{-4}$ &
$6.06\times10^{-4}$ &
$2.98\times10^{-5}$ &
$7.48\times10^{-5}$ &
$4.85\times10^{-5}$ \\ \hline
$\ell_{1}-1\le$ & 307 & 195 & 154 & 136 & 120 & 110 & 105 & 98 & 94 \\ \hline
\end{tabular}
}
\end{table}

\begin{table}[H]
\centering
\caption{Results for $(A,B)=(41.28b,\alpha)$}
\label{table4}
\tiny
\renewcommand{\arraystretch}{1.3}
\resizebox{\textwidth}{!}{
\begin{tabular}{|c|c|c|c|c|c|c|c|c|c|}
\hline
$b$ & 2 & 3 & 4 & 5 & 6 & 7 & 8 & 9 & 10 \\ \hline
$q_t$ & $q_{172}$ & $q_{164}$ & $q_{176}$ & $q_{171}$ & $q_{178}$ & $q_{170}$ & $q_{162}$ & $q_{158}$ & $q_{175}$ \\ \hline
$\varepsilon\ge$ &
$3.79\times10^{-3}$ &
$7.41\times10^{-4}$ &
$8.73\times10^{-4}$ &
$5.69\times10^{-6}$ &
$1.16\times10^{-4}$ &
$3.01\times10^{-4}$ &
$2.98\times10^{-5}$ &
$7.48\times10^{-5}$ &
$4.85\times10^{-5}$ \\ \hline
$n_1-n_3\le$ & 444 & 447 & 447 & 456 & 449 & 449 & 454 & 452 & 455 \\ \hline
\end{tabular}
}
\end{table}

In all cases, we conclude that
\[
1\le \ell_{1}\le
\frac{\log\left(41.28q_{172}/(3.79\times10^{-3})\right)}{\log 2}+1
\le 308,
\]
or
\[
1\le n_1-n_3\le
\frac{\log\left(206.4q_{175}/(4.85\times10^{-5})\right)}{\log\alpha}
\le 456.
\]

Within Case~1, we distinguish the following subcases:

\noindent
\textbf{Case 1A:}
\[
  n_1-n_3\le 456;
 \]
 \textbf{Case 1B:}
 \[
 \ell_{1}\le 308.
\]
\begin{step}
\label{step4.3}
We consider Case~1A and prove that, under the assumptions
$n_1-n_2\le 438$ and $n_1-n_3\le 456$, one has $\ell_{1}\le 314$.
\end{step}

In this step we assume that $\ell_{1}\ge 24$. 
From inequality~\eqref{eq13} we obtain
\[
0<
\left|
(\ell_{1}+\ell_{2}+\ell_{3})
\left(\frac{\log b}{\log\alpha}\right)
-n_1
+\frac{\log\left(
\frac{\sqrt{5}d_1}
{(b-1)\left(1+\alpha^{-(n_1-n_2)}+\alpha^{-(n_1-n_3)}\right)}
\right)}{\log\alpha}
\right|
<7.49b^{-(\ell_{1}-1)} .
\]
We apply Lemma~\ref{lem3} with
\[
\tau:=\frac{\log b}{\log\alpha},\quad
\mu:=\mu_{k,m}=
\frac{\log\left(
\frac{\sqrt{5}d_1}{(b-1)(1+\alpha^{-k}+\alpha^{-m})}
\right)}{\log\alpha},
\]
\[
(A,B)=(7.49,b),\qquad
w:=\ell_{1}-1,
\]
with $1\le d_1\le b-1$,
$k=n_1-n_2\in[0,438]$ and $m=n_1-n_3\in[0,456]$,
subject to $n_1-n_2\le n_1-n_3$.
We take $M:=3.1\times10^{86}$.
Using \textit{Mathematica}, we obtain:

\begin{table}[H]
\centering
\caption{}
\tiny
\renewcommand{\arraystretch}{1.3}
\resizebox{\textwidth}{!}{
\begin{tabular}{|c|c|c|c|c|c|c|c|c|c|}
\hline
$b$ & 2 & 3 & 4 & 5 & 6 & 7 & 8 & 9 & 10\\ \hline
$q_t$ & $q_{172}$ & $q_{164}$ & $q_{176}$ & $q_{171}$ & $q_{178}$ & $q_{170}$ & $q_{162}$ & $q_{158}$ & $q_{175}$ \\ \hline
$\varepsilon\ge$ &
$1.63\times10^{-5}$ & $3.75\times10^{-6}$ & $3.05\times10^{-7}$ &
$5.6\times10^{-6}$ & $8.72\times10^{-7}$ & $3.41\times10^{-6}$&
$1.41\times10^{-6}$ & $9.71\times10^{-7}$ & $6.85\times10^{-7}$ \\ \hline
$\ell_{1}-1\le$ & 313 & 198 & 159 & 135 & 122 & 112 & 105 & 100 & 96 \\ \hline
\end{tabular}}
\end{table}
Hence,
\[
1\le \ell_{1}\le
\frac{\log\left(7.49q_{172}/(1.63\times10^{-5})\right)}{\log 2}+1
\le 314 .
\]

\begin{step}
\label{step4.4}
We consider Case~1A and prove that, under the assumptions
$n_1-n_2\le 438$, $n_1-n_3\le 456$, and $\ell_{1}\le 314$,
one has $\ell_{2}\le 320$.
\end{step}

We assume $\ell_{2}\ge 24$.
Let
\[
\Gamma_4=
\log\left(
\frac{\sqrt{5}\left(d_1b^{\ell_{1}}-(d_1-d_2)\right)}
{(b-1)\left(1+\alpha^{-(n_1-n_2)}+\alpha^{-(n_1-n_3)}\right)}
\right)
-n_1\log\alpha
+(\ell_{2}+\ell_{3})\log b .
\]
From inequality~(\ref{eq15}) we get $|\Gamma_4|<6.26b^{-\ell_{2}}$ and hence
\[
0<
\left|
(\ell_{2}+\ell_{3})
\left(\frac{\log b}{\log\alpha}\right)
-n_1
+\frac{\log\left(
\frac{\sqrt{5}(d_1b^{\ell_{1}}-(d_1-d_2))}
{(b-1)(1+\alpha^{-(n_1-n_2)}+\alpha^{-(n_1-n_3)})}
\right)}{\log\alpha}
\right|
<13.1b^{-(\ell_{2}-1)} .
\]

Applying Lemma~\ref{lem3} with
\[
\tau:=\frac{\log b}{\log\alpha},\quad
(A,B)=(13.1,b),\quad
w:=\ell_{2}-1,
\]
we obtain:

\begin{table}[H]
\centering
\caption{}
\tiny
\renewcommand{\arraystretch}{1.3}
\resizebox{\textwidth}{!}{
\begin{tabular}{|c|c|c|c|c|c|c|c|c|c|}
\hline
$b$ & 2&3 & 4 & 5 & 6 & 7 & 8 & 9 & 10 \\ \hline
$q_t$ & $q_{172}$ & $q_{164}$ & $q_{176}$ & $q_{171}$ & $q_{178}$ & $q_{170}$ & $q_{162}$ & $q_{158}$ & $q_{175}$ \\ \hline
$\varepsilon\ge$ &
$2.62\times10^{-7}$& $1.36\times10^{-8}$ & $1.87\times10^{-7}$ &
$9.59\times10^{-8}$ & $5.14\times10^{-8}$&
$2.19\times10^{-8}$ &
$1.82\times10^{-8}$& $1.46\times10^{-8}$ & $5.14\times10^{-8}$\\ \hline
$\ell_{2}-1\le$ &319& 204 & 160 & 137 & 124 & 115 & 108 & 102 & 97 \\ \hline
\end{tabular}}
\end{table}

Thus
\[
1\le \ell_{2}\le
\frac{\log\left(13.1q_{172}/(2.62\times10^{-7})\right)}{\log 2}+1
\le 320 .
\]

\begin{step}
\label{step4.5}
We consider Case~1B and prove that, under the assumptions
$n_1-n_2\le 438$ and $\ell_{1}\le 308$, one has
$\ell_{2}\le 316$ or $n_1-n_3\le 455$.
\end{step}
Assume $\ell_{2}, n_1-n_3\ge 24$ and set
\[
\Gamma_5=
\log\left(
\frac{\sqrt{5}(d_1b^{\ell_{1}}-(d_1-d_2))}
{(b-1)(1+\alpha^{n_2-n_1})}
\right)
-n_1\log\alpha
+(\ell_{2}+\ell_{3})\log b .
\]
From inequality~\eqref{eq16} we obtain
$|\Gamma_5|<8.84\max\{b^{-(\ell_{2}-1)},\alpha^{-(n_1-n_3)}\}$ and hence
\[
0<
\left|
(\ell_{2}+\ell_{3})
\left(\frac{\log b}{\log\alpha}\right)
-n_1
+\frac{\log\left(
\frac{\sqrt{5}(d_1b^{\ell_{1}}-(d_1-d_2))}
{(b-1)(1+\alpha^{n_2-n_1})}
\right)}{\log\alpha}
\right|
<
\max\{18.38b^{-(\ell_{2}-1)},18.38\alpha^{-(n_1-n_3)}\}.
\]
\begin{table}[H]
\centering
\caption{Case $(A,B)=(18.38,b)$}
\tiny
\renewcommand{\arraystretch}{1.3}
\resizebox{\textwidth}{!}{
\begin{tabular}{|c|c|c|c|c|c|c|c|c|c|}
\hline
$b$ & 2 & 3 & 4 & 5 & 6 & 7 & 8 & 9 & 10 \\ \hline
$q_t$ & $q_{172}$ & $q_{164}$ & $q_{176}$ & $q_{171}$ & $q_{178}$ & $q_{170}$ & $q_{162}$ & $q_{158}$ & $q_{175}$ \\ \hline
$\varepsilon\ge$ &
$9.76\times10^{-6}$ & $4.41\times10^{-5}$ & $7.06\times10^{-6}$ &
$1.09\times10^{-5}$ & $5.98\times10^{-7}$ &
$1.13\times10^{-5}$ & $3.86\times10^{-6}$ &
$2.07\times10^{-6}$ & $1.64\times10^{-6}$ \\ \hline
$\ell_{2}-1\le$ &315 & 197 & 157 & 135 & 122 & 112 & 105 & 100 & 96 \\ \hline
\end{tabular}}
\end{table}

\begin{table}[H]
\centering
\caption{Case $(A,B)=(18.38,\alpha)$}
\tiny
\renewcommand{\arraystretch}{1.3}
\resizebox{\textwidth}{!}{
\begin{tabular}{|c|c|c|c|c|c|c|c|c|c|}
\hline
$b$ & 2 & 3 & 4 & 5 & 6 & 7 & 8 & 9 & 10 \\ \hline
$q_t$ & $q_{172}$ & $q_{164}$ & $q_{176}$ & $q_{171}$ & $q_{178}$ & $q_{170}$ & $q_{162}$ & $q_{158}$ & $q_{175}$ \\ \hline
$\varepsilon\ge$ &
$9.76\times10^{-6}$ & $4.41\times10^{-5}$ & $7.06\times10^{-6}$ &
$5.28\times10^{-6}$ & $5.98\times10^{-7}$ &
$7.82\times10^{-6}$ & $3.86\times10^{-6}$ &
$2.07\times10^{-6}$ & $1.64\times10^{-6}$ \\ \hline
$n_1-n_3\le$ &453 & 449 & 452 & 451 & 455 & 451 & 452 & 453 & 455 \\ \hline
\end{tabular}}
\end{table}

Therefore,
\[
1\le \ell_{2}\le
\frac{\log\left(18.38q_{172}/(9.76\times10^{-6})\right)}{\log 2}+1
\le 316
\]
or
\[
1\le n_1-n_3\le
\frac{\log\left(18.38q_{178}/(5.98\times10^{-7})\right)}{\log\alpha}
\le 455 .
\]
Within Case~1B we distinguish:\\
\textbf{Case 1B-A:} 
\[
n_1-n_3\le 455;
\]
\textbf{Case 1B-B:} 
\[
\ell_{2}\le 316.
\]
\begin{step}
\label{step4.6}
We consider Case 1B-A and show that under the assumption that 
$n_1-n_2\leq438$, $\ell_{1}\le 308$ and $n_1-n_3\le 455$ 
we have $\ell_{2}\le 319$.
\end{step}
In this step we consider inequality \eqref{eq17} and assume that $\ell_{2}\ge 24$. We set
\[
\Gamma_6:= \log\!\left(
\frac{(b-1)\left(1+\alpha^{-(n_1-n_2)}+\alpha^{-(n_1-n_3)} \right)}
{\sqrt{5}\left(d_1b^{\ell_{1}}-(d_1-d_2) \right)}
\right)
+n_1\log\alpha+(\ell_{2}+\ell_{3})\log b
\]
and inequality \eqref{eq17} yields that $|\Gamma_6|< 3.2b^{-(\ell_{2}-1)}$. Then we get
\[
0<\left|(\ell_{2}+\ell_{3})\left(\frac{\log b}{\log\alpha} \right)-n_1
+\frac{\log\left(\frac{\sqrt{5}\left(d_1b^{\ell_{1}}-(d_1-d_2) \right)}
{(b-1)\left(1+\alpha^{-(n_1-n_2)}+\alpha^{-(n_1-n_3)} \right)} \right)}
{\log\alpha}\right|
< 6.65 b^{-(\ell_{2}-1)} .
\]

Now we have to apply Lemma \ref{lem3} to inequality above by taking the following parameters
\[
\tau:= \frac{\log b}{\log\alpha}, \quad
\mu:= \frac{\log\left(
\frac{\sqrt{5}\left(d_1b^{\ell_{1}}-(d_1-d_2) \right)}
{(b-1)\left(1+\alpha^{-k}+\alpha^{-m} \right)}
\right)}{\log\alpha},
\quad (A, B)=(6.65, b),\quad w:= \ell_{2}-1,
\]
with $d_1, d_2 \in\{0,1,\dots, (b-1)\}$, $d_1\ge1$, for each possible value of 
$n_1-n_2=k=0,1,2\cdots,438$, 
$n_1-n_3=m=0,1,\dots,455$ and 
$\ell_{1}=1,2,\dots,314$ 
(with respect to the obvious condition that $n_1-n_2\le n_1-n_3$).  
We take $M:= 3.1\times10^{86}$.  
With Mathematica we got the following results

\begin{table}[H]
\centering
\caption{}
\resizebox{\textwidth}{!}{
\tiny
\renewcommand{\arraystretch}{1.3}
\begin{tabular}{|c|c|c|c|c|c|c|c|c|c|}
\hline
$b$ & 2 & 3 & 4 & 5 & 6 & 7 & 8 & 9 & 10 \\ \hline
$q_t$ & $q_{172}$ & $q_{164}$ & $q_{176}$ & $q_{171}$ & $q_{178}$ & $q_{170}$ & $q_{162}$ & $q_{158}$ & $q_{175}$ \\ \hline
$\varepsilon\ge$ &
$2.62\times10^{-7}$ & $1.36\times10^{-8}$ & $8.32\times10^{-8}$ &
$9.59\times10^{-8}$ & $1.89\times10^{-8}$ & $6.98\times10^{-8}$ &
$1.82\times10^{-8}$ & $3.08\times10^{-8}$ & $5.14\times10^{-8}$ \\ \hline
$\ell_{2}-1\le$ & 318 & 203 & 160 & 137 & 124 & 114 & 107 & 101 & 97 \\ \hline
\end{tabular}}
\end{table}

In all cases we can conclude that
\[
1\le \ell_{2} \le 
\frac{\log\left(6.65q_{172}/(2.62\times 10^{-7}) \right)}{\log 2}+1
\le 319.
\]

\begin{step}
\label{step4.7}
We consider Case 1B-B and show that under the assumption that 
$n_1-n_2\leq438$, $\ell_{1}\le 308$ and $\ell_{2}\le 319$ 
we have $n_1-n_3\le 463$.
\end{step}

In this step we consider inequality (\ref{eq18}) and assume that $n_1-n_3\ge 24$.\\ 

We set
\[
\Gamma_7:= \log\!\left(
\frac{\sqrt{5}\left(d_1b^{\ell_{1}+\ell_{2}}-(d_1-d_2)b^{\ell_{2}}-(d_1-d_3) \right)}
{(b-1)\left( 1+\alpha^{n_2-n_1}\right)}
\right)
-n_1\log\alpha+\ell_{3}\log b
\]
and inequality (\ref{eq18}) yields that 
$|\Gamma_7|< 2.8\alpha^{-(n_1-n_3)}$. Then we get
\[
0<\left|\ell_{3}\left(\frac{\log b}{\log\alpha} \right)-n_1+
\frac{\log\left(
\frac{\sqrt{5}\left(d_1b^{\ell_{1}+\ell_{2}}-(d_1-d_2)b^{\ell_{2}}-(d_2-d_3) \right)}
{(b-1)\left( 1+\alpha^{-(n_1-n_2)}\right)}
\right)}{\log\alpha}\right|
< 5.82 \alpha^{-(n_1-n_3)} .
\]

Now we have to apply Lemma \ref{lem3} to inequality above by taking the following parameters
\[
\tau:= \frac{\log b}{\log\alpha},\quad
\mu:= \frac{\log\left(
\frac{\sqrt{5}\left(d_1b^{\ell_{1}+\ell_{2}}-(d_1-d_2)b^{\ell_{2}}-(d_1-d_3) \right)}
{(b-1)\left( 1+\alpha^{-k}\right)}
\right)}{\log\alpha},
\quad (A,B)=(5.82,\alpha),\quad w:= n_1-n_3,
\]
with $d_1,d_2,d_3\in\{0,1,\dots,(b-1)\}$, $d_1\ge1$, for each possible value of 
$n_1-n_2=k=0,1,2\cdots,438$, 
$\ell_{1}=1,2,\dots,308$, and 
$\ell_{2}=1,2,\dots,319$ 
(with respect to the obvious condition that $\ell_{1}\le \ell_{2}$).  
We take $M:= 3.1\times10^{86}$.  
With Mathematica we got the following results

\begin{table}[H]
\centering
\caption{}
\resizebox{\textwidth}{!}{
\tiny
\renewcommand{\arraystretch}{1.3}
\begin{tabular}{|c|c|c|c|c|c|c|c|c|c|}
\hline
$b$ & 2 & 3 & 4 & 5 & 6 & 7 & 8 & 9 & 10 \\ \hline
$q_t$ & $q_{172}$ & $q_{164}$ & $q_{176}$ & $q_{171}$ & $q_{178}$ & $q_{170}$ & $q_{162}$ & $q_{158}$ & $q_{175}$ \\ \hline
$\varepsilon\ge$ &
$7.92\times10^{-7}$ & $2.94\times10^{-7}$ & $2.7\times10^{-8}$ &
$7.7\times10^{-8}$ & $2.1\times10^{-8}$ & $1.96\times10^{-7}$ &
$1.23\times10^{-8}$ & $1.91\times10^{-7}$ & $1.5\times10^{-8}$ \\ \hline
$n_1-n_3\le$ & 456 & 457 & 461 & 457 & 459 & 456 & 462 & 456 & 463 \\ \hline
\end{tabular}}
\end{table}

In all cases we can conclude that
\[
1\le n_1-n_3 \le 
\frac{\log\left(5.82q_{176}/(1.5\times10^{-8})\right)}{\log\alpha}
\le 463.
\]

\begin{step}
\label{step4.8}
We consider Case 2 and show that under the assumption that 
$\ell_{1}\leq301$, we have $\ell_{2}\le 310$ or $n_1-n_2\le 451$.
\end{step}

In this step we consider inequality \eqref{eq19} and assume that 
$\ell_{2}, n_1-n_2\ge 24$.
We set
\[
\Gamma'_1:= \log\left(
\frac{\sqrt{5}\left( d_1b^{\ell_{1}}-(d_1-d_2)\right)}{b-1}
\right)
-n_1\log\alpha+(\ell_{2}+\ell_{3})\log b
\]
and inequality \eqref{eq19} yields that 
$|\Gamma'_1|<11.4\max\left\{ b^{-(\ell_{2}-1)},\alpha^{-(n_1-n_2)}\right\}$. 
Then we get
\[
0<\left|(\ell_{2}+\ell_{3})\left(\frac{\log b}{\log\alpha} \right)-n_1+
\frac{\log\left(
\frac{\sqrt{5}\left( d_1b^{\ell_{1}}-(d_1-d_2)\right)}{b-1}
\right)}{\log\alpha}\right|
< \max\left\{23.7b^{-(\ell_{2}-1)},\,23.7\alpha^{-(n_1-n_2)}\right\}
\alpha^{-(n_1-n_3)} .
\]

Now we have to apply Lemma \ref{lem3} to inequality above by taking the following parameters
\[
\tau:= \frac{\log b}{\log\alpha},\quad
\mu:= \frac{\log\left(
\frac{\sqrt{5}\left( d_1b^{\ell_{1}}-(d_1-d_2)\right)}{b-1}
\right)}{\log\alpha},
\]
\[
(A,B)=(23.7,b)\ \text{or}\ (23.7,\alpha),\quad 
w:= \ell_{2}-1 \ \text{or}\ w:= n_1-n_2,
\]
with $d_1,d_2\in\{0,1,\dots,(b-1)\}$, $d_1\ge1$, for each possible value of 
$\ell_{1}=1,2,\dots,301$.  
We take $M:= 3.1\times10^{86}$.  
With Mathematica we got the following results

\begin{table}[H]
\centering
\caption{Case $(A,B)=(23.7,b)$}
\resizebox{\textwidth}{!}{
\tiny
\renewcommand{\arraystretch}{1.3}
\begin{tabular}{|c|c|c|c|c|c|c|c|c|c|}
\hline
$b$ & 2 & 3 & 4 & 5 & 6 & 7 & 8 & 9 & 10 \\ \hline
$q_t$ & $q_{172}$ & $q_{164}$ & $q_{176}$ & $q_{171}$ & $q_{178}$ & $q_{170}$ & $q_{162}$ & $q_{158}$ & $q_{175}$ \\ \hline
$\varepsilon\ge$ &
$6.61\times10^{-4}$ & $2.1\times10^{-4}$ & $4.02\times10^{-4}$ &
$4.45\times10^{-4}$ & $3.26\times10^{-5}$ & $7.01\times10^{-4}$ &
$6.24\times10^{-4}$ & $7.11\times10^{-5}$ & $2.01\times10^{-5}$ \\ \hline
$\ell_{2}-1\le$ & 309 & 196 & 155 & 133 & 120 & 110 & 103 & 98 & 95 \\ \hline
\end{tabular}}
\end{table}

\begin{table}[H]
\centering
\caption{Case $(A,B)=(23.7,\alpha)$}
\resizebox{\textwidth}{!}{
\tiny
\renewcommand{\arraystretch}{1.3}
\begin{tabular}{|c|c|c|c|c|c|c|c|c|c|}
\hline
$b$ & 2 & 3 & 4 & 5 & 6 & 7 & 8 & 9 & 10 \\ \hline
$q_t$ & $q_{172}$ & $q_{164}$ & $q_{176}$ & $q_{171}$ & $q_{178}$ & $q_{170}$ & $q_{162}$ & $q_{158}$ & $q_{175}$ \\ \hline
$\varepsilon\ge$ &
$6.61\times10^{-4}$ & $2.01\times10^{-4}$ & $4.02\times10^{-4}$ &
$4.66\times10^{-4}$ & $3.26\times10^{-4}$ & $2.33\times10^{-5}$ &
$2.03\times10^{-4}$ & $7.11\times10^{-5}$ & $2.01\times10^{-5}$ \\ \hline
$n_1-n_2\le$ & 445 & 446 & 444 & 442 & 447 & 444 & 445 & 446 & 451 \\ \hline
\end{tabular}}
\end{table}

In all cases we can conclude that
\[
1\le \ell_{2} \le 
\frac{\log\left(23.7q_{172}/(6.61\times10^{-4})\right)}{\log 2}+1
\le 310
\]
or
\[
1\le n_1-n_2 \le 
\frac{\log\left(23.7q_{175}/(2.01\times10^{-5})\right)}{\log\alpha}
\le 451.
\]
Within Case~2, we distinguish two further subcases:\\
\textbf{Case 2A:} 
\[
\ell_{2}\le 310,
\]
 \textbf{Case 2B:} 
 \[
 n_1-n_2\le 451.
\]

\begin{step}
\label{step4.9}
We consider Case~2A and show that, under the assumptions that $\ell_{1}\leq301$ and $\ell_{2}\le 310$, we have $n_1-n_2\le 459$.
\end{step}

In this step we consider inequality~(\ref{eq20}) and assume that $n_1-n_2\ge 24$.\\ 

We set
\[
\Gamma'_2:= \log\!\left(\frac{\sqrt{5}\left(d_1b^{\ell_{1}+\ell_{2}}-(d_1-d_2)b^{\ell_{2}}-(d_1-d_3) \right) }{b-1} \right)
-n_1\log\alpha+\ell_{3}\log b ,
\]
and inequality~(\ref{eq20}) yields that $|\Gamma'_2|< 5.54\alpha^{-(n_1-n_2)}$. Then we get
\[
0<\left|\ell_{3}\left(\frac{\log b}{\log\alpha} \right)-n_1+
\frac{\log\left(\frac{\sqrt{5}\left(d_1b^{\ell_{1}+\ell_{2}}-(d_1-d_2)b^{\ell_{2}}-(d_1-d_3) \right) }{b-1} \right)}{\log\alpha}  \right|
< 11.52 \alpha^{-(n_1-n_2)}.
\]

Now we apply Lemma~\ref{lem3} to the above inequality by taking the following parameters
\[
\tau:= \frac{\log b}{\log\alpha}, \qquad
\mu:= \frac{\log\left(\frac{\sqrt{5}\left(d_1b^{\ell_{1}+\ell_{2}}-(d_1-d_2)b^{\ell_{2}}-(d_2-d_3) \right) }{b-1} \right)}{\log\alpha},
\qquad (A,B)=(11.52,\alpha), \qquad w:= n_1-n_2,
\]
with $d_1,d_2,d_3\in\{0,1,\dots,b-1\}$, $d_1\ge1$, for each possible value of $\ell_{1}=1,2,\dots,301$ and $\ell_{2}=1,2,\dots,310$ (with respect to the obvious condition that $\ell_{1}\le \ell_{2}$).  
We take $M:=3.1\times10^{86}$. Using \textit{Mathematica}, we obtained the following results:

\begin{table}[H]
\centering
\caption{}
\renewcommand{\arraystretch}{1.3}
\tiny
\begin{tabular}{|c|c|c|c|c|c|c|c|c|c|}
\hline
$b$ & 2 & 3 & 4 & 5 & 6 & 7 & 8 & 9 & 10 \\ \hline
$q_t$ & $q_{172}$ & $q_{164}$ & $q_{176}$ & $q_{171}$ & $q_{178}$ & $q_{170}$ & $q_{162}$ & $q_{158}$ & $q_{175}$ \\ \hline
$\varepsilon\ge$ & $5.44\cdot10^{-6}$ & $1.71\cdot10^{-6}$ & $7.73\cdot10^{-7}$ & $1.28\cdot10^{-6}$ & $3.92\cdot10^{-7}$ & $3.72\cdot10^{-8}$ & $1.23\cdot10^{-7}$ & $1.29\cdot10^{-6}$ & $1.59\cdot10^{-7}$ \\ \hline
$n_1-n_2\le$ & 453 & 454 & 456 & 453 & 455 & 461 & 459 & 453 & 459 \\ \hline
\end{tabular}
\end{table}

In all cases we conclude that
\[
1\le n_1-n_2 \le 
\frac{\log\left(11.52q_{175}/(1.59\times10^{-7}) \right)}{\log \alpha}
\le 459.
\]

\begin{step}
\label{step4.10}
We consider Case~2A and show that, under the assumptions $\ell_{1}\le301$, $\ell_{2}\le310$ and $n_1-n_2\le459$, we have $n_1-n_3\le463$.
\end{step}
In this step we consider inequality~(\ref{eq21}) and assume that $n_1-n_3\ge24$. We set
\[
\Gamma'_3:= \log\!\left(\frac{\sqrt{5}\left(d_1b^{\ell_{1}+\ell_{2}}-(d_1-d_2)b^{\ell_{2}}-(d_2-d_3) \right) }{(b-1)\left(1+\alpha^{n_2-n_1}\right)} \right)
-n_1\log\alpha+\ell_{3}\log b ,
\]
and inequality~\eqref{eq21} yields that $|\Gamma'_3|<2.8\alpha^{-(n_1-n_3)}$. Then we get
\[
0<\left|\ell_{3}\left(\frac{\log b}{\log\alpha} \right)-n_1+
\frac{\log\left(\frac{\sqrt{5}\left(d_1b^{\ell_{1}+\ell_{2}}-(d_1-d_2)b^{\ell_{2}}-(d_2-d_3) \right) }{(b-1)\left(1+\alpha^{-(n_1-n_2)}\right)} \right)}{\log\alpha}  \right|
< 5.82 \alpha^{-(n_1-n_3)}.
\]
Now we apply Lemma~\ref{lem3} to the above inequality by taking the following parameters
\[
\tau:= \frac{\log b}{\log\alpha}, \qquad
\mu:= \frac{\log\left(\frac{\sqrt{5}\left(d_1b^{\ell_{1}+\ell_{2}}-(d_1-d_2)b^{\ell_{2}}-(d_2-d_3) \right) }{(b-1)\left(1+\alpha^{-k}\right)} \right)}{\log\alpha},
\qquad (A,B)=(5.82,\alpha), \qquad w:= n_1-n_3,
\]
with $d_1,d_2,d_3\in\{0,1,\dots,b-1\}$, $d_1\ge1$, for each possible value of $n_1-n_2=k=0,1,\dots,459$, $\ell_{1}=1,2,\dots,301$ and $\ell_{2}=1,2,\dots,310$ (with respect to the obvious condition that $\ell_{1}\le\ell_{2}$).  
We take $M:=3.1\times10^{86}$. Using \textit{Mathematica}, we obtained the same results as in Step~\ref{step4.7}. Thus we have $1\le n_1-n_3\le463$.

\begin{step}
\label{step4.11}
We consider Case~2B and show that, under the assumptions $\ell_{1}\le301$ and $n_1-n_2\le451$, we have $n_1-n_3\le455$ or $\ell_{2}\le316$.
\end{step}

In this step we use the same parameters as in Step~\ref{step4.5}, except that $\ell_{1}\le301$ and $n_1-n_2\le451$. We obtain the same results as in Step~\ref{step4.5}. Within Case~2B, we distinguish two further subcases:\\
     \textbf{Case 2B-A:} 
     \[
     n_1-n_3\le455, 
     \]
     \textbf{Case 2B-B:} 
\[     
     \ell_{2}\le316.
\]

\begin{step}
\label{step4.12} 
We consider Case~2B-A and show that, under the assumptions $\ell_{1}\le301$, $n_1-n_2\le451$ and $n_1-n_3\le455$, we have $\ell_{2}\le316$.
\end{step}

In this step we use the same parameters as in Step~\ref{step4.6}, except for the updated bounds $\ell_{1}\le301$, $n_1-n_2\le451$ and $n_1-n_3\le455$. The results are identical to those of Step~\ref{step4.6}.

\begin{step}
\label{step4.13} 
We consider Case~2B-B and show that, under the assumptions $\ell_{1}\le301$, $n_1-n_2\le451$, and $\ell_{2}\le316$, we have $n_1-n_3\le463$.
\end{step}

In this step we use the same parameters as in Step~\ref{step4.7}, except for the updated bounds $\ell_{1}\le301$, $n_1-n_2\le451$, and $\ell_{2}\le316$. The results are identical to those of Step~\ref{step4.7}.

Table~\ref{tablex} summarizes the upper bounds obtained so far.

\begin{table}[H]
\centering
\footnotesize
\renewcommand{\arraystretch}{1.3}
\begin{tabular}{|c|c|c|c|}
\hline
\textbf{Upper bound ($\le$)} & \textbf{Case 1A} & \textbf{Case 1B-A} & \textbf{Case 1B-B} \\ \hline
$n_1-n_2$ & 438 & 438 & 438 \\ \hline
$n_1-n_3$ & 456 & 455 & 463 \\ \hline
$\ell_{1}$ & 314 & 308 & 308 \\ \hline
$\ell_{2}$ & 320 & 319 & 316 \\ \hline
\end{tabular}

\vspace{0.5em}
\begin{tabular}{|c|c|c|c|c|}
\hline
\textbf{Upper bound ($\le$)} & \textbf{Case 2A} & \textbf{Case 2B-A} & \textbf{Case 2B-B} & \textbf{Overall} \\ \hline
$n_1-n_2$ & 541 & 451 & 451 & 451 \\ \hline
$n_1-n_3$ & 463 & 455 & 463 & 463 \\ \hline
$\ell_{1}$ & 301 & 301 & 301 & 314 \\ \hline
$\ell_{2}$ & 310 & 316 & 316 & 320 \\ \hline
\end{tabular}
\caption{Summary of upper bounds for all cases}
\label{tablex}
\end{table}

\begin{step}
\label{step4.14}
Under the assumptions $n_1-n_2\le451$, $n_1-n_3\le463$, $\ell_1\le314$, and $\ell_{2}\le320$, we obtain an upper bound for $n_1$.
\end{step}

For the final step, we consider inequality~(\ref{eq22}) and set
\[
\Gamma:= \log\!\left(\frac{\sqrt{5}\left(d_1b^{\ell_{1}+\ell_{2}}-(d_1-d_2)b^{\ell_{2}}-(d_2-d_3) \right) }{(b-1)\left( 1+\alpha^{n_2-n_1}+\alpha^{n_3-n_1}\right)} \right)
-n_1\log\alpha+\ell_{3}\log b ,
\]
and inequality~(\ref{eq22}) yields $|\Gamma|<4.48\alpha^{-n_1}$. Then we obtain
\[
0<\left|\ell_{3}\left(\frac{\log b}{\log\alpha} \right)-n_1+
\frac{\log\left(\frac{\sqrt{5}\left(d_1b^{\ell_{1}+\ell_{2}}-(d_1-d_2)b^{\ell_{2}}-(d_2-d_3) \right) }{(b-1)\left( 1+\alpha^{-(n_1-n_2)}+\alpha^{-(n_1-n_3)}\right)} \right)}{\log\alpha} \right| 
< 9.31 \alpha^{-n_1}.
\]

We then apply Lemma~\ref{lem3} with parameters
\[
\tau:=\frac{\log b}{\log\alpha}, \quad
\mu:= \frac{\log\left(\frac{\sqrt{5}\left(d_1b^{\ell_{1}+\ell_{2}}-(d_1-d_2)b^{\ell_{2}}-(d_2-d_3)\right)}{(b-1)\left(1+\alpha^{-k}+\alpha^{-m}\right)}\right)}{\log\alpha}, \quad
(A,B)=(9.31,\alpha), \quad w:=n_1,
\]
with $d_1,d_2,d_3\in\{0,1,\dots,b-1\}$, $d_1\ge1$, for each $k=0,1,\dots,451$ and $m=0,1,\dots,463$, $\ell_{1}=1,\dots,314$, $\ell_{2}=1,\dots,320$ (with $\ell_{1}\le\ell_{2}$ and $n_1-n_2\le n_1-n_3$).  
We take $M:=3.1\times10^{86}$. We obtain the following results:
\begin{table}[H]
	\centering
	\caption{}
	
	\renewcommand{\arraystretch}{1.3}
	\begin{tabular}{|c|c|c|c|c|c|c|c|c|c|}
		\hline
		$b$ & 2 & 3 & 4 & 5 & 6 & 7 & 8 & 9 & 10 \\ \hline
		$n_1\le$ & 74 & 74 & 71 & 71 & 71 & 69 & 72 & 70 & 69 \\ \hline
	\end{tabular}
\end{table}
In all cases we conclude that $n_1\le 74$.

In the light of the above results, we need to check equation (\ref{eq1}) in the cases $2\le b\le 10$ for $d_1,d_2,d_3\in\{0,1,\dots,b-1\}$, $d_1\ge1$, $1\le n_1\le 74$ and $\ell_{1}+\ell_{2}+\ell_{3}\le 76$. A quick inspection using Mathematica reveals that Diophantine equation (\ref{eq1}) has only solution listed in the statement of Theorem \ref{thrm2}.
\hfill $ \square $
\begin{remark}
	For this last step to reduce the upper bound on $n_1$, the numerical resolution of the problem encountered a major technical difficulty due to its combinatorial structure, which involves seven nested loops, leading to an exponential growth in the number of iterations as the base $b$ increases. A theoretical complexity analysis showed that, for values such as $b=10$, a classical sequential execution in Wolfram Mathematica would have required an estimated computation time of nearly 248  years, making any exhaustive exploration unrealistic on a standard architecture.
	To overcome this limitation, the algorithm was ported to a parallel computing architecture using a NVIDIA RTX A2000 GPU through the CUDA framework. This approach enables the simultaneous distribution of independent iterations across thousands of processing cores, thereby transforming the problem into a massively parallelizable task. The observed performance gain exceeds a $20,000\times$ speed-up compared to the sequential execution: for instance, the computation for $b=2$, initially estimated at 13 months in sequential mode, was completed in approximately 8 minutes, while the case $b=3$, estimated at 5 years, was completed in about 50 minutes. Thanks to this architecture, higher bases (up to $b=10$) were processed within only a few hours, making it possible to effectively determine the numerical bounds presented in this work.
\end{remark}
\section*{Acknowledgements}

The authors sincerely express their gratitude to the Ministère de l’Efficacité du Service Public et de la Transformation Numérique, through Togo Data Lab, for their invaluable technical support and for granting access to their high-performance computing infrastructure. The large-scale numerical computations required in this work were made possible thanks to their advanced parallel computing environment, including GPU-based architecture, which enabled the efficient execution of highly demanding combinatorial calculations.

The authors particularly acknowledge the expertise of the Togo Data Lab team in optimizing and deploying the algorithm on a parallel architecture, thereby significantly reducing computation times and ensuring the reliability of the numerical results presented in this article.

\section*{Conflict of interest}
On behalf of all authors, the corresponding author states that there is no conflict of interest.

\end{document}